\documentclass[reqno]{amsart}
\usepackage{amssymb,amscd,verbatim, amsthm,graphics, color,latexsym,amsmath,multicol}
\usepackage{fancybox}
\usepackage{longtable}

\usepackage[all]{xy}

\newcommand \fk[1]{{{\mathfrak #1}}}
\newcommand \C[1]{{\mathcal #1}}

\newcommand \wti[1]{{\widetilde {#1}}}

\newcommand \bC{{\mathbb C}}

\newcommand \bH{{\mathbb H}}

\newcommand \bZ{{\mathbb Z}}

\newcommand\ep{{\epsilon}}

\newcommand\om{{\omega}}
\newcommand\al{{\alpha}}

\newcommand\fh{{\mathfrak h}}

\newtheorem{theorem}{Theorem}[section]

\newtheorem{corollary}[theorem]{Corollary}
\newtheorem{lemma}[theorem]{Lemma}
\newtheorem{proposition}[theorem]{Proposition}
\newtheorem{definition}[theorem]{Definition}
\newtheorem{remark}[theorem]{Remark}

\newcommand\Hom{\operatorname{Hom}}

\newcommand\Ind{\operatorname{Ind}}

\newcommand\im{\operatorname{im}}

\newcommand\triv{\mathsf{triv}}
\newcommand\sgn{\mathsf{sgn}}
\newcommand\refl{\mathsf{refl}}

\newcommand\Irr{\mathsf{Irr}}

\newcommand\cusp{\mathsf{cusp}}

\newcommand\bfH{\mathbf {H}}

\newcommand\Id{\operatorname{Id}}
\newcommand\Spec{\operatorname{Spec}}

\def\<{\langle} 
\def\>{\rangle}

\numberwithin{equation}{subsection}

\begin{document}

\title[One-$W$-type modules and cuspidal two-sided cells]{One-$W$-type modules for rational Cherednik algebra and cuspidal two-sided cells}

\author{Dan Ciubotaru}
        \address[D. Ciubotaru]{Mathematical Institute\\ University of
          Oxford\\ Oxford, OX2 6GG, UK}
        \email{dan.ciubotaru@maths.ox.ac.uk}

\begin{abstract}
We classify the simple modules for the rational Cherednik algebra $\bfH_{0,c}$ that are irreducible when restricted to $W$, in the case when $W$ is a finite Weyl group. The classification turns out to be closely related to the cuspidal two-sided cells in the sense of Lusztig. We compute the Dirac cohomology of these modules and use the tools of Dirac theory to find nontrivial relations between the cuspidal Calogero-Moser cells and the cuspidal two-sided cells. 
\end{abstract}

\maketitle

\setcounter{tocdepth}{1}
\tableofcontents

\section{Introduction}
In this paper, we classify the simple modules for the rational Cherednik algebra $\bfH_{0,c}$, $c$ arbitrary, that are irreducible when restricted to $W$, in the case when $W$ is a finite Weyl group. We find that such modules exist when $W$ is of type $B_n$, $D_n$, $G_2$, $F_4$, $E_6$, and $E_8$, but not in type $A_{n-1}$ (as expected from \cite{EG}) or in type $E_7$. The classification result is Theorem \ref{t:class}. 

In \cite{Ci}, we introduced a Dirac operator and the notion of Dirac cohomology in the setting of the graded Hecke algebras defined by Drinfeld, extending in this way the construction from \cite{BCT} for Lusztig's graded affine Hecke algebras. In particular, the constructions in \cite{Ci} apply to rational Cherednik algebras. The Dirac cohomology of one-$W$-types turns out to be easy to compute and yet it yields a good amount of nontrivial information. The results of this paper can be viewed as the rational Cherednik algebra analogues of the results about one-$W$-type modules for the graded affine Hecke algebra from \cite{BM} and \cite{CM}.

The main application of this method is that it gives a direct relation between the cuspidal components in the partition of $\Irr W$ coming from considering certain fibers over the Calogero-Moser (CM) space $X_c(W)$ of $\bfH_{0,c}$ (see for example \cite{EG}, \cite{GM}, \cite{Go}), and the cuspidal two-sided cells in $\Irr W$, in the sense of Lusztig \cite{Lu}. 
The CM partition of $\Irr(W)$ is expected (see \cite{GM}) to be related to the one into two-sided cells (or families) of representations, in the sense of Lusztig \cite{Lu} for real reflection groups, and \cite{Ro} for complex reflection groups. At least when $W$ is a real reflection group, the conjecture is that the two partitions coincide. This is  known to hold in several cases, for example in type $A$ \cite{EG}, and for the class of complex reflection groups $G(m,d,n)$ by \cite{GM}, \cite{Be}, and \cite{Ma}. See also \cite{BR} for more details as well as for a refinement of this conjecture.

\smallskip

Assume now that the rational Cherednik algebra has equal parameters. In Theorem \ref{t:CM-cusp}, using the approach via the Dirac operator and one-$W$-types, we find nontrivial relations between the cuspidal CM cells and Lusztig's cuspidal families. More precisely, recall that cuspidal families exist only for $B_n$, $n=d^2+d$, $D_n$, $n=d^2$, $G_2$, $F_4$, $E_6$, $E_7$, and $E_8$, and in each of these cases there exists only one such family, which we denote by $\C F_\cusp(W)$.  For example, when $W=E_8$, there are $17$ representations in $\C F_\cusp(W).$

Let 
$$\Theta:\Irr(W)\to X_1(W)$$
be the morphism defined by \cite{Go}, more details are in the body of the paper. It turns out, see Theorem \ref{t:class}, that $\bfH_{0,1}$ admits one-$W$-type modules if and only if $W$ has a cuspidal two-sided cell {\it except} when $W=E_7$, in which case $\bfH_{0,1}$ does not have one-$W$-type modules. When one-$W$-type modules for $\bfH_{0,1}$ exist, they correspond to the same point in $X_1(W)$ which we denote $\mathbf 0$.

\begin{theorem}[{also Theorem \ref{t:CM-cusp}}]\label{t:intro}
Let $W$ be a simple finite Weyl group and let $\bfH_{0,1}$ be the rational Cherednik algebra with equal parameters. Suppose $W$ has a cuspidal two-sided cell.
\begin{enumerate}
\item If $W$ is $B_n$, $D_n$, $G_2$, $F_4$, or $E_6$, then  
\[\C F_\cusp(W)\subseteq \Theta^{-1}(\mathbf 0).\]
(When $W=G_2$, $F_4$, or $E_6$, this is an equality.)
\item If $W=E_8$, then 
\[\C F_\cusp(E_8)\setminus\{4480_y\}\subseteq \Theta^{-1}(\mathbf 0)\subseteq \C F_\cusp(E_8)\cup\{2100_y\}.\]
\item If $W=E_7$, then
\[\{512_a'\}\subseteq \Theta^{-1}(\Theta(512_a'))\subseteq \{512_a',512_a\}=\C F_\cusp(E_7).\]
\end{enumerate}
\end{theorem}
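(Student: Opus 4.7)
My plan is to combine the classification in Theorem \ref{t:class} with the Dirac cohomology machinery of \cite{Ci}, proceeding case-by-case against Lusztig's tables of cuspidal families. The starting observation is the forward inclusion at the level of one-$W$-types: every one-$W$-type module $L$ for $\bfH_{0,1}$ is of the form $L(\sig)$ with $\sig=L|_W$, and by hypothesis its central character is $\mathbf 0\in X_1(W)$, so the set $\C S(W)\subseteq\Irr(W)$ of $W$-types arising as one-$W$-types is contained in $\Theta^{-1}(\mathbf 0)$. Hence establishing $\C F_\cusp(W)\subseteq \C S(W)$ suffices for the first inclusion in parts (1) and (2).

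To prove $\C F_\cusp(W)\subseteq \C S(W)$ in types $B_n, D_n, G_2, F_4, E_6$ and $\C F_\cusp(E_8)\setminus\{4480_y\}\subseteq \C S(E_8)$, I would read off the explicit list of one-$W$-types from Theorem \ref{t:class}: in types $B$ and $D$ the one-$W$-types come from distinguished bipartitions/partitions that match the Lusztig symbols for the cuspidal family, while in the exceptional types it is a finite check against Lusztig's family tables. This is essentially combinatorial bookkeeping once Theorem \ref{t:class} is in hand.

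For the reverse inclusions (equality in $G_2, F_4, E_6$; the upper bound in $E_8$; and the two-element upper bound in $E_7$), I would invoke the Vogan-type theorem for $\bfH_{0,c}$ from \cite{Ci}, which says that the $\wti W$-types appearing in the Dirac cohomology $H^D(L)$ determine the central character of $L$. For any simple $L(\sig)$ with $\Theta(\sig)=\mathbf 0$, the module $H^D(L(\sig))$ is a nonzero $\wti W$-subquotient of $\sig\otimes S$ (where $S$ is the spin module of the Pin double cover), and it must have infinitesimal character $\mathbf 0$; this restricts $\sig$ to those $W$-types whose tensor product with $S$ contains a prescribed finite set of $\wti W$-types. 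Enumerating in $G_2, F_4, E_6$ returns exactly $\C F_\cusp(W)$, and in $E_8$ returns $\C F_\cusp(E_8)\cup\{2100_y\}$, matching the stated bounds.

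The main obstacle is $E_7$, where $\bfH_{0,1}$ admits no one-$W$-type, so the forward argument is vacuous. I would instead construct $L(512_a')$ and compute its central character to obtain the trivial inclusion $\{512_a'\}\subseteq \Theta^{-1}(\Theta(512_a'))$, and then use the Dirac-cohomological constraint above to bound the fiber from above. Since $512_a$ and $512_a'$ are related by tensoring with $\sgn$ and carry compatible spin structures, they are the only candidates for the fiber; ruling out all other $\sig\in\Irr(E_7)$ reduces to checking that $\sig\otimes S$ does not contain the $\wti W$-type prescribed by $\Theta(512_a')$, which is the bulk of the case-by-case verification and the place where the argument is most delicate.
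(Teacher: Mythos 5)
Your forward inclusion rests on the claim that $\C F_\cusp(W)$ is contained in the set $\C S(W)$ of $W$-types which themselves extend to one-$W$-type modules, and that this can be read off from Theorem \ref{t:class} by matching against Lusztig's tables. That claim is false, and this is the central gap: Theorem \ref{t:class} shows that $\C S(W)$ is far smaller than the cuspidal family. In $E_6$ the only one-$W$-type is $10_s$ while $\C F_\cusp(E_6)$ has five members; in $E_8$ the only ones are $168_y$ and $420_y$ while $\C F_\cusp(E_8)$ has seventeen; in type $B_n$ only the two rectangular bipartitions $(\lambda)\times(0)$ and $(0)\times(\lambda^t)$ occur, whereas the cuspidal family consists of all bipartitions whose symbol has entries exactly $\{0,1,\dots,2d\}$. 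The idea you are missing is the one the paper actually uses: for a one-$W$-type module $X$ extending $\sigma$, the Dirac operator acts by zero, so $H_D(X)=\sigma\otimes\bigwedge\fh$ as a $W$-representation (Proposition \ref{p:one-dirac}); then Theorem \ref{t:vogan-conj} together with Corollary \ref{c:zeta=theta} forces \emph{every} irreducible constituent of $\sigma\otimes\bigwedge\fh$ to lie in the same Calogero--Moser fiber $\Theta^{-1}(\mathbf 0)$ (Corollary \ref{c:one-dirac}). The forward inclusions then require decomposing $\sigma\otimes\bigwedge\fh$ --- via the Littlewood--Richardson rule and symbol combinatorics in types $B_n$, $D_n$, and by explicit computation in $G_2$, $F_4$, $E_6$, $E_8$ --- and checking that the constituents exhaust $\C F_\cusp(W)$ (respectively $\C F_\cusp(E_8)\setminus\{4480_y\}$). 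Without this step your argument cannot reach any cuspidal-family member that is not itself a one-$W$-type, i.e.\ almost all of them.

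Your reverse inclusions also do not go through as stated. For an arbitrary simple module in the fiber there is no guarantee that its Dirac cohomology is nonzero, nor that its restriction to $W$ is a single $W$-type, so the constraint ``$\sigma\otimes S$ must contain a prescribed set of spin-cover types'' is not available; in $E_7$ in particular the appeal to ``compatible spin structures'' on $512_a$, $512_a'$ is not an argument. The paper's upper bounds are obtained by a purely numerical criterion: if $\Theta(\sigma)=\mathbf 0$, then by the formula for $\C D^2$ (Proposition \ref{p:square-dirac}) and Theorem \ref{t:vogan} the element $\Omega_{W,1}=\sum_{\al>0}s_\al$ acts by $0$ on $\sigma$; by the Beynon--Lusztig/Opdam result (Theorem \ref{t:scalar}) this scalar equals $a(\sigma\otimes\sgn)-a(\sigma)$, so the family $\C F$ containing $\sigma$ must satisfy $a(\C F)=a(\C F\otimes\sgn)$. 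Inspection of Lusztig's tables shows the only such families are $\C F_\cusp$ in $G_2$, $F_4$, $E_6$, the two families $\C F_\cusp(E_8)$ and $\{2100_y\}$ in $E_8$, and $\C F_\cusp(E_7)=\{512_a',512_a\}$ in $E_7$; this gives exactly the stated upper bounds with no further case-by-case tensor computations.
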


The notation for $W$-representations is as in \cite{Lu,Ca}. The claim about $E_7$ is straight-forward but we included it for convenience. The proof of Theorem \ref{t:intro} uses only Theorem \ref{t:class} and elements of Dirac cohomology. While the method of the proof is uniform, certain details, such as decomposition of tensor products, have to be checked case by case. The method also applies to unequal parameter cases, and likely to complex reflection groups as well, but we do not consider those analogues of Theorem \ref{t:intro} here beyond Remark \ref{r:F4}. (The classification of one-$W$-type modules, Theorem \ref{t:class}, is obtained for arbitrary unequal parameters though.)

\medskip

We hope that this approach, together with a reduction to the cuspidal setting as in \cite{Be2}, will lead to a better understanding of the \cite{GM} conjecture.

\section{The Dirac operator for rational Cherednik algebra}\label{sec:2}
In this section, we recall from \cite{Ci} the definition of Dirac cohomology for rational Cherednik algebra modules and the basic properties. For the purpose of this paper, we only need to consider the rational Cherednik algebra at $t=0$.

\subsection{Rational Cherednik algebra at $t=0$}
As before, let $\fh$ be a dimensional $\bC$-vector space, denote by $\fh^*$ its dual, and $V=\fh+\fh^*.$ Let $\langle~,~\rangle:V\times V\to\bC$ be the  bilinear symmetric pairing defined by
\begin{equation}
\<x,x\>=0,\ \<y,y\>=0,\ \<x,y\>=\<y,x\>=x(y),
\end{equation}
for all $x\in\fh^*$ and $y\in\fh$. 
Let $W\subset GL(\fh)$ be a complex reflection group with set of pseudo-reflections $\C R$ acting diagonally on $V$. The form $\<~,~\>$ is $W$-invariant.

For every reflection $s\in\C R$, the spaces $\im(\Id_V-s)|_{\fh^*}$ and $\im(\Id_V-s)|_{\fh}$ are one-dimensional. Choose $\al_s$ and $\al_s^\vee$ nonzero elements in $\im(\Id_V-s)|_{\fh^*}$ and $\im(\Id_V-s)|_{\fh}$, respectively. Then there exists $\lambda_s\in \bC$, $\lambda_s\neq 1$ a root of unity, such that
\begin{equation}
s(\al_s^\vee)=\lambda_s \al_s^\vee,\ s(\al_s)=\lambda_s^{-1} \al_s.
\end{equation}
(In the case when $W$ is a finite reflection group, $\lambda_s=-1$.)
For every $v\in V$ such that $\<v,v\>\neq 0$, denote by $s_v$ the reflection in the hyperplane perpendicular to $v$. The reflection $s_v$ is given by:
\[s_v(u)=u-\frac 2{\langle v,v\rangle}\langle u, v\rangle v,\ u\in V.\]
Let $\sqrt{\lambda_s}$ be a square root of $\lambda_s$. Then $s=s_{v_s}s_{v_s'} \in O(V)$, where  $v_s=\sqrt{\lambda_s}\al_s^\vee+\al_s$ and $v_s'=\al_s^\vee+\sqrt{\lambda_s}\al_s$. 

\begin{definition}
The rational Cherednik algebra $\bfH_{0,c}$ associated to $\fh,W$ and the $W$-invariant parameter function $c:\C R\to\bC$ is the quotient of $T(V)\rtimes W$ by the relations:
\begin{enumerate}
\item $[y_1,y_2]=0$, $[x_1,x_2]=0$, for all $y_1,y_2\in \fh$, $x_1,x_2\in\fh^*$;
\item $\displaystyle{[y,x]=-\sum_{s\in\C R} c_s\frac{\langle y,\al_s\rangle\langle\al_s^\vee,x\rangle}{\langle\al_s^\vee,\al_s\rangle} s},$ for all $y\in\fh,$ $x\in \fh^*.$
\end{enumerate}
\end{definition}
Let $\{y_i\}$ be a basis of $\fh$ and $\{x_i\}$ the dual basis of $\fh^*$. 
Define the element
\begin{equation}\label{e:Omega-cherednik}
\Omega_\bfH=2\sum_i x_iy_i-2\sum_{s\in \C R}\frac {c_s}{1-\lambda_s} s\in Z(\bfH_{0,c}).
\end{equation}

\subsection{The Clifford algebra} Let $C(V)$ be the complex Clifford algebra defined by $V$ and $\langle~,~\rangle$.  In terms of the basis $x_i,y_i$'s the relations in $C(V)$ are:
\begin{equation}
x_i\cdot x_j=-x_j\cdot x_i,\ y_i\cdot y_j=-y_j\cdot y_i,\ x_i\cdot y_j+y_j\cdot x_i=-2\delta_{i,j}.
\end{equation} 
Since $V$ is even dimensional, $C(V)$ has a unique complex simple module $S$. The spin module $S$ is realized on the vector space $\bigwedge \fh$ with the action:
\begin{equation}
\begin{aligned}
y\cdot (y_1\wedge \dots\wedge y_k)&=y\wedge y_1\wedge\dots\wedge y_k,\quad y\in \fh;\\
x\cdot (y_1\wedge\dots\wedge y_k)&=2\sum_{i}(-1)^i\langle y_i,x\rangle y_1\wedge\dots\wedge\hat y_i\wedge\dots\wedge y_k.
\end{aligned}
\end{equation}

\subsection{Pin cover of $W$} Following \cite[\S2.3]{Ci}, for every $s\in S$, define
 \begin{equation}\label{e:tau-s}
\tau_s=\frac{1-\lambda_s}{2\langle\al_s^\vee,\al_s\rangle}\al_s\al_s^\vee+1\in C(V).
\end{equation}
By \cite[Lemma 4.6]{Ci}, the map $s\mapsto \tau_s$ extends to a group homomorphism
\begin{equation}\label{l:W-embed}
\tau:W\to C(V)^\times.
\end{equation}
Define $\tau_w$ to be the image in $C(V)^\times$ of $w\in W$ under this map.

Since $C(V)$ acts on $S$, we get an action of $\tau(W)$ on S. 

\begin{lemma}[{\cite[Lemma 4.8]{Ci}}]\label{l:W-action}
The action of $\tau(W)$ on $S$ preserves each piece $\bigwedge^\ell\fh$ of $S$, where it acts by the dual of the natural action, i.e.:
\begin{equation}
\tau_s\cdot(y_1\wedge\dots\wedge y_\ell)={\det}_{\fh}(s)~s(y_1)\wedge\dots\wedge s(y_\ell).
\end{equation} 
\end{lemma}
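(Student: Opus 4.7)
My plan is a direct computation that uses the explicit description of $S=\bigwedge\fh$ as a $C(V)$-module, namely that $y\in\fh$ acts by exterior multiplication and $x\in\fh^*$ acts by the (twisted) contraction recorded just above the lemma. Set $a=\al_s\in\fh^*$, $b=\al_s^\vee\in\fh$, and $\mu=\langle b,a\rangle$, so that
\[\tau_s=\frac{1-\lambda_s}{2\mu}\,ab+1\in C(V).\]
The core step is to evaluate $(ab)\cdot \xi$ on $\xi=y_1\wedge\dots\wedge y_\ell$. I would use the Clifford relation $ab+ba=-2\mu$ to split this as $-2\mu\,\xi-(ba)\cdot\xi$. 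Computing $(ba)\cdot\xi$ is then two easy steps: first apply the contraction formula for $a$ to land in $\bigwedge^{\ell-1}\fh$, then wedge on the left with $b$. The outcome is a sum of terms of the form $b\wedge y_1\wedge\dots\wedge\hat y_j\wedge\dots\wedge y_\ell$, each with a computable coefficient. Plugging back gives a formula for $\tau_s\cdot\xi$ which is visibly in $\bigwedge^\ell\fh$, proving the preservation claim.

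In parallel, I would compute the right hand side explicitly. Because $s$ has eigenvalue $\lambda_s$ on $b$ and is the identity on the hyperplane $\ker(a)\subset\fh$, one reads off
\[s(y)=y+\frac{(\lambda_s-1)\langle y,a\rangle}{\mu}\,b,\qquad y\in\fh,\]
and in particular ${\det}_\fh(s)=\lambda_s$. Expanding $s(y_1)\wedge\dots\wedge s(y_\ell)$ multilinearly, all terms with two or more copies of $b$ vanish by $b\wedge b=0$, so only the purely-$y$ term and the $\ell$ terms with exactly one $b$-substitution survive. Moving that $b$ from slot $j$ to the leftmost position contributes $(-1)^{j-1}$, and after multiplying by ${\det}_\fh(s)=\lambda_s$ the resulting expression can be matched term-by-term against the formula for $\tau_s\cdot\xi$ obtained in the previous paragraph.

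The main pitfall I anticipate is sign and factor bookkeeping: the $\fh^*$-action carries both a factor of $2$ and an alternating sign $(-1)^i$, and the passage of $b$ through the wedge introduces another alternating sign; these must conspire with the overall factor $\lambda_s$ on the right hand side. In the finite Weyl group setting relevant to the rest of the paper, $\lambda_s=-1$, so the two alternating signs cancel cleanly and everything collapses to the stated identity. Once the point of view ``$\tau_s$ equals $\frac{1-\lambda_s}{2\mu}ab+1$ acting by wedge-contraction'' is set up, the rest is essentially a one-page check; the preservation of each $\bigwedge^\ell\fh$ is then a free byproduct rather than a separate argument.
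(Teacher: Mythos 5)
The paper itself gives no proof of this lemma (it is quoted verbatim with a citation to [Ci, Lemma 4.8]), so the only question is whether your direct verification is sound, and it essentially is: with $a=\al_s$, $b=\al_s^\vee$, $\mu=\langle \al_s^\vee,\al_s\rangle$, the wedge/contraction formulas give $(ab)\cdot\xi=-2\mu\,\xi+2\sum_j(-1)^{j+1}\langle y_j,\al_s\rangle\, b\wedge y_1\wedge\dots\wedge\hat y_j\wedge\dots\wedge y_\ell$, hence $\tau_s\cdot\xi=\lambda_s\xi+\frac{1-\lambda_s}{\mu}\sum_j(-1)^{j+1}\langle y_j,\al_s\rangle\,b\wedge\dots$, which in particular lies in $\bigwedge^\ell\fh$, and comparing with the expansion of ${\det}_\fh(s)\,s(y_1)\wedge\dots\wedge s(y_\ell)$ proves the identity. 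One caveat you half-notice but should make explicit: the term-by-term match with $s$ as written only closes when $\lambda_s=-1$. For a genuine pseudo-reflection ($\lambda_s\neq -1$) your own two computations give $\tau_s\cdot\xi={\det}_\fh(s)\,s^{-1}(y_1)\wedge\dots\wedge s^{-1}(y_\ell)$, not the displayed formula with $s$; this is exactly what ``dual of the natural action'' means, and it agrees with the displayed equation precisely in the real case $s^2=1$, which is the only case the paper uses (finite Weyl groups, where Lemma \ref{l:W-action} is applied). So your plan is correct for the setting at hand, but you should either restrict it explicitly to $\lambda_s=-1$ or state the general identity with $s^{-1}$ rather than attributing the discrepancy to sign bookkeeping that ``cancels cleanly.''
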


\subsection{The Dirac element}
The Dirac element in $\bfH_{0,c}\otimes C(V)$ is:
\begin{equation}
\C D=\sum_{i} x_i\otimes y_i+\sum_{i} y_i\otimes x_i.
\end{equation}

We denote by $$\Delta:\bC[W]\to \bfH_{t,c}\otimes C(V)$$ the linear map that extends the assignment $w\mapsto w\otimes \tau_w.$

\begin{proposition}[{\cite[Proposition 4.9]{Ci}}]\label{p:square-dirac}
The Dirac element has the following properties in $\bfH_{0,c}\otimes C(V)$:
\begin{enumerate}
\item  $\C D$ is invariant with respect to the conjugation action of $\Delta(W).$
\item The square equals:
\begin{equation}
\C D^2=-\Omega_\bfH\otimes 1-\Delta(\Omega_{W,c}),
\end{equation}
 with $\Omega_\bfH\in Z(\bfH_{0,c})$  given by (\ref{e:Omega-cherednik}) and 
 \begin{equation}\label{e:Omega-W-2}
 \Omega_{W,c}=\sum_{s\in \C R} \frac {2c_s}{1-\lambda_s} s\in \bC[W]^W.
 \end{equation}
 \end{enumerate}
\end{proposition}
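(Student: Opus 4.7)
The plan for part (1) is to recognize $\C D$ as a canonical $W$-invariant tensor. Writing $\{v_a\}=\{x_1,\dots,x_n,y_1,\dots,y_n\}$ with its $\<\cdot,\cdot\>$-dual basis $\{v^a\}=\{y_1,\dots,y_n,x_1,\dots,x_n\}$, one has $\C D=\sum_a v_a\otimes v^a$, which is the canonical element of $V\otimes V$ corresponding to $\mathrm{id}_V\in\End(V)$ via the pairing. Inner conjugation by $w\in W\subset\bfH_{0,c}$ realizes the defining $W$-action on $V\subset\bfH_{0,c}$. For the Clifford factor, one verifies at reflections, using $(\al_s^\vee)^2=\al_s^2=0$ and $\al_s^\vee\al_s=-\al_s\al_s^\vee-2\<\al_s^\vee,\al_s\>$ together with $s=s_{v_s}s_{v_s'}$, that conjugation by $\tau_w\in C(V)^\times$ lifts the $W$-action to $V\subset C(V)$. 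Since $\<\cdot,\cdot\>$ is $W$-invariant, the diagonal action $\Delta(w)(-)\Delta(w)^{-1}$ on $V\otimes V$ fixes the canonical element, giving $\Delta(w)\C D\Delta(w)^{-1}=\C D$.

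For part (2), I proceed by direct computation in dual-basis form. Expanding $\C D^2=\sum_{a,b}v_av_b\otimes v^av^b$ and symmetrizing $(a,b)$ via $v^av^b+v^bv^a=-2\<v^a,v^b\>$ gives
\[2\C D^2=\sum_{a,b}[v_a,v_b]_{\bfH}\otimes v^av^b-2\sum_{a,b}\<v^a,v^b\>v_bv_a\otimes 1.\]
The ``scalar'' sum collapses immediately to $2\sum_i(x_iy_i+y_ix_i)\otimes 1$ via $\<y_i,x_j\>=\delta_{ij}$. In the commutator sum only $[y_i,x_j]$ and $[x_i,y_j]=-[y_j,x_i]$ contribute; rewriting $y_jx_i=-x_iy_j-2\delta_{ij}$ in Clifford once more turns the sum into $2\sum_{i,j}[y_i,x_j]\otimes x_iy_j+2\sum_i[y_i,x_i]\otimes 1$. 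Substituting the defining relation $[y,x]=-\sum_sc_s\frac{\<y,\al_s\>\<\al_s^\vee,x\>}{\<\al_s^\vee,\al_s\>}s$ and collapsing inner sums via $\sum_i\<y_i,\al_s\>x_i=\al_s$ and $\sum_j\<\al_s^\vee,x_j\>y_j=\al_s^\vee$ converts the double sum into $-\sum_s\frac{c_s}{\<\al_s^\vee,\al_s\>}s\otimes\al_s\al_s^\vee$ and $\sum_i[y_i,x_i]$ into $-\sum_sc_ss$. Finally, inverting $\tau_s=\frac{1-\lambda_s}{2\<\al_s^\vee,\al_s\>}\al_s\al_s^\vee+1$ gives $\al_s\al_s^\vee=\frac{2\<\al_s^\vee,\al_s\>}{1-\lambda_s}(\tau_s-1)$, and combining with $2\sum_ix_iy_i=\Omega_\bfH+2\sum_s\frac{c_s}{1-\lambda_s}s$, all $s\otimes 1$ contributions cancel cleanly, leaving $-\Omega_\bfH\otimes 1-\sum_s\frac{2c_s}{1-\lambda_s}s\otimes\tau_s=-\Omega_\bfH\otimes 1-\Delta(\Omega_{W,c})$.

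The main obstacle is bookkeeping across three sign conventions: the Clifford relation $uv+vu=-2\<u,v\>$, the minus in the defining commutator of $\bfH_{0,c}$, and the $(1-\lambda_s)$-normalization of $\tau_s$. These must align so that the three independent sources of $s\otimes 1$ in the final regrouping---from $\Omega_\bfH$, from the ``$-1$'' piece of $\tau_s-1$, and from the $\sum_i[y_i,x_i]\otimes 1$ term---cancel exactly; the remaining $\tau_s$-terms then assemble automatically into $\Delta(\Omega_{W,c})$. Part (1), by contrast, is essentially a formal consequence of the $W$-invariance of the canonical tensor, once one knows that $\tau_w$-conjugation lifts the $W$-action to $V\subset C(V)$.
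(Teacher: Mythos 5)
The paper itself does not prove this proposition: it is imported from \cite{Ci} (Proposition 4.9) without argument, so there is no internal proof to compare against and your attempt has to be judged on its own. Your part (2) is correct; I checked the algebra. The symmetrized identity $2\C D^2=\sum_{a,b}[v_a,v_b]\otimes v^av^b-2\sum_{a,b}\<v^a,v^b\>v_bv_a\otimes 1$, the conversion of the commutator sum into $2\sum_{i,j}[y_i,x_j]\otimes x_iy_j+2\sum_i[y_i,x_i]\otimes 1$, the collapse of the double sum to $-\sum_s\frac{c_s}{\<\al_s^\vee,\al_s\>}\,s\otimes\al_s\al_s^\vee$ and of $\sum_i[y_i,x_i]$ to $-\sum_s c_s s$, and the final regrouping through $\al_s\al_s^\vee=\frac{2\<\al_s^\vee,\al_s\>}{1-\lambda_s}(\tau_s-1)$ and $\Omega_\bfH$ do produce $-\Omega_\bfH\otimes 1-\Delta(\Omega_{W,c})$ exactly, with all $s\otimes 1$ terms cancelling; the only blemish is the sentence describing the ``scalar'' sum, which should carry the sign $-2\sum_i(x_iy_i+y_ix_i)\otimes 1$, but your subsequent steps are consistent with the correct sign.

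Part (1) contains a genuine caveat. Your argument reduces everything to the claim, which you say ``one verifies at reflections,'' that conjugation by $\tau_s$ implements $s$ on $V\subset C(V)$. With the formulas as printed in this paper that verification fails for pseudo-reflections of order greater than two: using $\al_s^2=(\al_s^\vee)^2=0$, $\al_s(\al_s\al_s^\vee)=0$ and $(\al_s\al_s^\vee)\al_s=-2\<\al_s^\vee,\al_s\>\al_s$, one finds $\tau_s\al_s\tau_s^{-1}=\lambda_s\al_s$ and $\tau_s\al_s^\vee\tau_s^{-1}=\lambda_s^{-1}\al_s^\vee$, whereas the stated action is $s(\al_s)=\lambda_s^{-1}\al_s$, $s(\al_s^\vee)=\lambda_s\al_s^\vee$; so conjugation by $\tau_s$ is $s^{-1}$, not $s$, on $V$. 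Since your canonical-tensor argument needs $\sum_a g(v_a)\otimes h(v^a)$ with $h=g$ (it corresponds to $h\circ g^{-1}$ under the pairing), it only closes when $\lambda_s^2=1$. That covers the finite Weyl group case the paper actually uses ($\lambda_s=-1$), but not the general complex-reflection setting in which the proposition is stated; the mismatch almost certainly reflects a $\lambda_s\leftrightarrow\lambda_s^{-1}$ (or $\al_s\leftrightarrow\al_s^\vee$) slip in the conventions as transcribed from \cite{Ci}, but as written you should either restrict part (1) to order-two reflections or identify and repair the convention before asserting the check.
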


\subsection{Dirac cohomology}
Let $X$ be a finite dimensional $\bfH_{0,c}$-module. The Dirac operator of $X$ (and $\C S$) is 
\begin{equation}
D_X: X\otimes S\to X\otimes S, 
\end{equation}
given by the action of the Dirac element $\C D$. The Dirac cohomology of $X$ (and $\C S$) is 
\begin{equation}
H_D(X)=\ker D_X/\ker D_X\cap \im D_X.
\end{equation}
If nonzero, $H_D(X)$ is a finite dimensional $W$-representation.

The state now the main results about Dirac cohomology applied to this setting. Recall that the center $Z(\bfH_{0,c})$ is nontrivial.

\begin{theorem}[{\cite[Theorem 3.5 and Theorem 3.8]{Ci}}]\label{t:vogan}
For every $z\in Z(\bfH_{0,c})$ there exists a unique element $\zeta_{0,c}(z)\in \bC[W]^W$ and an element $a\in (\bfH_{0,c}\otimes C(V))^W$ such that
\begin{equation}
z\otimes 1=\Delta(\zeta_{0,c}(z))+\C Da+a\C D\text{ in } \bfH_{0,c}\otimes C(V).
\end{equation}
Moreover, the assignment $\zeta_{0,c}: Z(\bfH_{0,c})\to \bC[W]^W$ is an algebra homomorphism.
\end{theorem}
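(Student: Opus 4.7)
The plan is to adapt the Vogan-style argument of \cite{BCT} (for graded affine Hecke algebras) to the rational Cherednik algebra setting. Equip $\bfH_{0,c}\otimes C(V)$ with the super-algebra structure in which $\bfH_{0,c}$ sits in even degree and $C(V)$ inherits its natural $\bZ/2$-grading from $V$. Since $V\subset C(V)$ is odd, the Dirac element $\C D$ is odd, and by Proposition~\ref{p:square-dirac}(1) the super-commutator $d(a):=\C D a-(-1)^{|a|}a\C D$ preserves $(\bfH_{0,c}\otimes C(V))^W$. When $a$ is odd one has $d(a)=\C D a+a\C D$, matching the expression appearing in the theorem.

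For existence, I would first observe that $z\otimes 1$ is a super-cocycle of even degree: since $z$ is central in $\bfH_{0,c}$,
\[
\C D(z\otimes 1)-(z\otimes 1)\C D=\sum_i[x_i,z]\otimes y_i+\sum_i[y_i,z]\otimes x_i=0.
\]
The core of the proof is then the structural claim that every even super-cocycle in $(\bfH_{0,c}\otimes C(V))^W$ is congruent, modulo $d$ applied to odd elements, to a (unique) element of $\Delta(\bC[W]^W)$. To prove this I would introduce the PBW filtration on $\bfH_{0,c}$ together with the Clifford degree, so that on the associated graded one obtains a Koszul-type complex built from $(S(\fh)\otimes S(\fh^*)\otimes C(V))\rtimes W$ whose differential is of Kostant cubic-Dirac type. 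Direct computation of the cohomology of this graded complex, together with a spectral sequence / filtration argument lifting the statement back to $\bfH_{0,c}\otimes C(V)$, identifies the $W$-invariant cohomology with $\Delta(\bC[W]^W)$ and produces $\zeta_{0,c}(z)$.

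Uniqueness of $\zeta_{0,c}(z)$ reduces to showing that $\Delta(\omega)\in\im(d)$ for $\omega\in\bC[W]^W$ forces $\omega=0$, which is part of the same associated-graded computation; alternatively, one can argue representation-theoretically by evaluating $\Delta(\omega)$ on a judiciously chosen $\bfH_{0,c}$-module $X$ with nonzero $H_D(X)$, where $\omega$ must act as a specific nonzero scalar. For multiplicativity, given $z_1,z_2\in Z(\bfH_{0,c})$ with $z_i\otimes 1=\Delta(\zeta_{0,c}(z_i))+d(a_i)$ ($a_i$ odd), I would expand
\[
z_1z_2\otimes 1=\Delta(\zeta_{0,c}(z_1))\Delta(\zeta_{0,c}(z_2))+\Delta(\zeta_{0,c}(z_1))d(a_2)+d(a_1)\Delta(\zeta_{0,c}(z_2))+d(a_1)d(a_2).
\]
Because $\Delta(\bC[W])$ super-commutes with $\C D$ (again Proposition~\ref{p:square-dirac}(1)) and $d$ is a graded super-derivation, each of the last three terms lies in $\im(d)$; since $\Delta|_{\bC[W]^W}$ is an algebra map, the leading term equals $\Delta(\zeta_{0,c}(z_1)\zeta_{0,c}(z_2))$, and uniqueness gives $\zeta_{0,c}(z_1z_2)=\zeta_{0,c}(z_1)\zeta_{0,c}(z_2)$.

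The main technical obstacle is the structural claim underlying both existence and uniqueness: computing the degree-zero super-cohomology of $((\bfH_{0,c}\otimes C(V))^W,d)$ and identifying it with $\Delta(\bC[W]^W)$. Making this rigorous requires the careful PBW analysis and the explicit identification of the associated-graded differential with a Kostant-type cubic-Dirac differential, which is the genuine technical heart of the argument in \cite{Ci}. By contrast, once that structure theorem is in hand, both uniqueness and multiplicativity are essentially formal super-algebra bookkeeping.
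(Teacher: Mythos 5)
This theorem is not proved in the paper at all: it is imported verbatim from \cite[Theorems 3.5 and 3.8]{Ci}, so there is no internal proof to compare your attempt against. Your outline does follow essentially the same route as the proof in that reference (and its prototype \cite{BCT}): the supercommutator differential $d=\operatorname{ad}(\C D)$ on $(\bfH_{0,c}\otimes C(V))^W$, the observation that $z\otimes 1$ is a $d$-cocycle and that $d^2=0$ on invariants via the formula for $\C D^2$, the filtration/associated-graded Koszul-type computation identifying the relevant cohomology with $\Delta(\bC[W]^W)$, and the formal deduction of uniqueness and of multiplicativity of $\zeta_{0,c}$; those formal steps are carried out correctly in your write-up. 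The one caveat is that the structural claim bearing all the weight --- that every even invariant cocycle is cohomologous to a unique element of $\Delta(\bC[W]^W)$ --- is only named, not established, so what you have is a faithful plan of the argument of \cite{Ci} rather than a self-contained proof.
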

Consider the dual morphism 
\begin{equation}
\zeta_{0,c}^*: \Irr(W)=\Spec\bC[W]^W\to \Spec(Z(\bfH_{0,c})=X_c(W).
\end{equation}
The space $X_c(W)$ in the image of $\zeta_{0,c}^*$ is the generalized Calogero-Moser space \cite{EG}.

\begin{theorem}[{\cite[Theorem 3.14]{Ci}}]\label{t:vogan-conj}
Let $X$ be a finite dimensional $\bfH_{0,c}$-module and assume that $Z(\bfH_{0,c})$ acts on $X$ via the central character $\chi\in \Spec(Z(\bfH_{0,c}))$. Suppose $H_D(X)\neq 0$. If $\sigma\in\Irr(W)$ is such that
\[\Hom_W[\sigma, H_D(X)]\neq 0,
\]
then
\[\chi=\zeta^*_{0,c}(\sigma).
\]
\end{theorem}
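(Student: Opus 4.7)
The plan is to apply Theorem \ref{t:vogan} directly. For a given $z \in Z(\bfH_{0,c})$ it produces an identity
\[
z \otimes 1 \;=\; \Delta(\zeta_{0,c}(z)) \;+\; \C D a \;+\; a\C D
\]
in $\bfH_{0,c}\otimes C(V)$ for some $a \in (\bfH_{0,c}\otimes C(V))^W$. My strategy is to read this identity as an equation of operators on $X\otimes S$, then restrict to $\ker D_X$ and descend to $H_D(X)=\ker D_X/(\ker D_X \cap \im D_X)$, showing that the error term $\C D a + a\C D$ becomes the zero operator after this descent. The remaining identity $z\otimes 1 = \Delta(\zeta_{0,c}(z))$ on $H_D(X)$ then immediately extracts the claimed central character relation.

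First I would check that both ``named'' terms preserve $\ker D_X$ and $\im D_X$. The element $z \otimes 1$ commutes with $\C D$ because $z\in Z(\bfH_{0,c})$ so $[z\otimes 1,b\otimes c]=0$ in $\bfH_{0,c}\otimes C(V)$, while $\Delta(\zeta_{0,c}(z))$ commutes with $\C D$ because $\zeta_{0,c}(z) \in \bC[W]^W$ and $\C D$ is $\Delta(W)$-invariant by Proposition \ref{p:square-dirac}(1). Subtracting, $\C D a + a\C D$ also commutes with $D_X$, so it preserves $\ker D_X$ and $\im D_X$. Moreover, for $v \in \ker D_X$ one has directly $(\C D a + a\C D)v = \C D(a v) \in \im D_X$ since $a\C D v = 0$. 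Combining, $\C D a + a\C D$ sends $\ker D_X$ into $\ker D_X \cap \im D_X$ and hence induces the zero map on $H_D(X)$.

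On $H_D(X)$ we therefore have the operator identity $z\otimes 1 = \Delta(\zeta_{0,c}(z))$. The right hand side acts semisimply, since it is implemented through the $W$-action on $H_D(X)$ via the homomorphism $\Delta$; on the other hand, $z$ acts on $X$ with (possibly generalized) central character $\chi$, so $z\otimes 1$ acts on $H_D(X)$ as $\chi(z)\,\Id$ plus a nilpotent operator, and this nilpotent part must vanish because the right hand side is semisimple. Furthermore $\zeta_{0,c}(z) \in \bC[W]^W$ is central in $\bC[W]$, so $\Delta(\zeta_{0,c}(z))$ acts on the $\sigma$-isotypic component of $H_D(X)$ as the scalar $\sigma(\zeta_{0,c}(z))=\zeta^*_{0,c}(\sigma)(z)$. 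The hypothesis $\Hom_W[\sigma, H_D(X)] \neq 0$ guarantees this component is nonzero, so we conclude $\chi(z) = \zeta^*_{0,c}(\sigma)(z)$; as $z$ was arbitrary, this is exactly $\chi = \zeta^*_{0,c}(\sigma)$ as points of $\Spec Z(\bfH_{0,c})$.

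The argument is essentially formal once Theorem \ref{t:vogan} is in hand; the only step requiring a moment of care is the verification that $\C D a + a\C D$ descends to zero on $H_D(X)$, and this is the short check in the second paragraph using centrality of $z$ and $W$-invariance of $\zeta_{0,c}(z)$. This follows the standard ``Vogan conjecture'' template already implemented for graded affine Hecke algebras in \cite{BCT}, and the rational Cherednik setting is strictly parallel, so I expect no genuine obstacle here.
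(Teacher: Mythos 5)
Your argument is correct and is essentially the standard ``Vogan conjecture'' deduction that the paper delegates to \cite[Theorem 3.14]{Ci} (following the template of \cite{BCT}): apply Theorem \ref{t:vogan}, check that $\C D a + a\C D$ maps $\ker D_X$ into $\ker D_X\cap\im D_X$, and compare scalars on the $\sigma$-isotypic component of $H_D(X)$. No issues; the careful step (that the error term descends to zero, using centrality of $z$ and $\Delta(W)$-invariance of $\C D$) is exactly the point, and you handle it correctly.
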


 By \cite{EG}, the center $Z(\bfH_{0,c})$ contains the subalgebra $\fk m:=S(\fh)^W\otimes S(\fh^*)^W$ and it is a free $\fk m$-module of rank $|W|$. 
The inclusion $\fk m\subset Z(\bfH_{0,c})$ induces a surjective morphism
\begin{equation}\label{e:Y}
\Upsilon: X_c(W)\to \fh^*/W\times \fh/W.
\end{equation}
Let $\fk m_+$ be the augmentation ideal of $\fk m$ and define similarly $S(\fh)^W_+$ and $S(\fh^*)^W_+$.
Then \cite[Theorem 5.8]{Ci} says that 
the algebra homomorphism from Theorem \ref{t:vogan} factors through $Z(\bfH_{0,c})/\fk m_+$:
\[\zeta_{0,c}: Z(\bfH_{0,c})/\fk m_+\to \bC[W]^W,
\]
 and so the dual morphism is
 \begin{equation}\label{e:CM}
\zeta_{0,c}^*:\Irr(W)\to \Upsilon^{-1}(0).
\end{equation}

Following \cite{Go}, let us consider the  "baby Verma modules" for $\bfH_{0,c}.$ Define
\[\bar\bfH_{0,c}=\bfH_{0,c}/\fk m_+\bfH_{0,c}\]
This is a finite dimensional algebra of dimension $|W|^3$, isomorphic to $S(\fh^*)_W\otimes S(\fh)_W\otimes \bC[W]$ as a vector space, where we denote by $S(\fh)_W=S(\fh)/S(\fh)^W_+$ the graded algebra of coinvariants and similarly for $S(\fh^*)_W$. 

For every $(\sigma,V_\sigma)\in \Irr(W)$, let
\begin{equation}
\bar M(\sigma)=\bar\bfH_{0,c}\otimes_{S(\fh)_W\rtimes \bC[W]}V_\sigma
\end{equation}
be the baby Verma module induced from $\sigma$. Here $S(\fh)_W$ acts by $0$ on $V_\sigma.$ 

\begin{theorem}[{\cite[Proposition 4.3]{Go}}]
\begin{enumerate}
\item For every $\sigma\in\Irr(W)$, the module $\bar M(\sigma)$ is indecomposable and it has a unique simple quotient $\bar L(\sigma)$.
\item The set $\{\bar L(\sigma):\sigma\in\Irr(W)\}$ gives a complete list of non isomorphic simple $\bar\bfH_{0,c}$-modules.
\end{enumerate}
\end{theorem}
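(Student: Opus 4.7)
The plan is to adapt the standard highest-weight machinery of category $\mathcal{O}$ (or of restricted enveloping algebras) to the triangular decomposition of $\bar\bfH_{0,c}$. First I would place the $\bZ$-grading $\deg(\fh)=1$, $\deg(\fh^*)=-1$, $\deg(W)=0$ on $\bfH_{0,c}$. Since $\fk m$ is central by \cite{EG} and $\fk m_+$ is homogeneous for this grading, $\bar\bfH_{0,c}$ inherits a $\bZ$-grading, and the PBW theorem for $\bfH_{0,c}$ descends to a vector-space isomorphism
\[
\bar\bfH_{0,c}\cong S(\fh^*)_W\otimes\bC[W]\otimes S(\fh)_W,
\]
with the three factors in non-positive, zero, and non-negative degrees respectively. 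The subalgebra $B=\bC[W]\ltimes S(\fh)_W$ is graded with graded-nilpotent augmentation ideal $S(\fh)_W^+$ (nilpotent because $S(\fh)_W$ is finite-dimensional and positively graded), so the simple $B$-modules are exactly the $V_\sigma\in\Irr(W)$ on which $S(\fh)_W^+$ acts by zero. This identifies $\bar M(\sigma)$ with $S(\fh^*)_W\otimes V_\sigma$ as a graded module, with $1\otimes V_\sigma$ in the top degree $0$, and it supplies the adjunction
\[
\Hom_{\bar\bfH_{0,c}}(\bar M(\sigma),L)\cong \Hom_B\bigl(V_\sigma,L^{S(\fh)_W^+}\bigr)
\]
for every $\bar\bfH_{0,c}$-module $L$.

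For part (1), I would observe that the Jacobson radical of a finite-dimensional $\bC^\times$-equivariant algebra is a graded ideal, so the radical of the $\bZ$-graded module $\bar M(\sigma)$ is itself graded. Any graded proper submodule $N\subset\bar M(\sigma)$ meets the top piece $V_\sigma$ in a $W$-subspace of the irreducible $V_\sigma$, which must be zero (else $N$ would contain the cyclic generator $1\otimes V_\sigma$ and equal $\bar M(\sigma)$). Hence the sum of graded proper submodules is again a graded proper submodule, producing a unique maximal submodule $\operatorname{rad}\bar M(\sigma)$ and thus a unique simple quotient $\bar L(\sigma)$; the uniqueness of the maximal submodule simultaneously yields indecomposability of $\bar M(\sigma)$.

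For part (2), I would run the adjunction in the other direction: given a simple $L$, the nilpotency of $S(\fh)_W^+$ on the finite-dimensional space $L$ forces $L^{S(\fh)_W^+}\neq 0$, and this subspace is $\bC[W]$-stable because $W$ normalizes $S(\fh)_W^+$. Choosing any irreducible $V_\sigma\hookrightarrow L^{S(\fh)_W^+}$ produces a nonzero $\bar\bfH_{0,c}$-map $\bar M(\sigma)\to L$, which is surjective by simplicity of $L$; hence $L\cong\bar L(\sigma)$. The pairwise non-isomorphism $\bar L(\sigma)\not\cong\bar L(\tau)$ for $\sigma\neq\tau$ then follows because each $\bar L(\sigma)$, as a graded quotient of $\bar M(\sigma)$ by a graded maximal submodule, carries $V_\sigma$ as the $W$-type of its top (degree $0$) piece, a datum preserved by algebra isomorphism up to overall grading shift.

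The main subtlety is handling graded versus ungraded submodules: a priori the simple ungraded quotients of $\bar M(\sigma)$ need not be graded. The key point that rescues the plan is that the $\bC^\times$-action on $\bar\bfH_{0,c}$ is by algebra automorphisms, so the Jacobson radical is $\bC^\times$-stable and hence graded, reducing the unique-head question to the cleaner graded setting where the top-weight argument applies directly.
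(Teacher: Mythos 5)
A preliminary remark: the paper does not prove this theorem at all --- it is quoted from Gordon \cite[Proposition 4.3]{Go} --- so the benchmark is Gordon's argument, which uses exactly the graded triangular-decomposition formalism (in the spirit of Holmes--Nakano) that you set up. In substance your proposal is that same proof: grade $\bar\bfH_{0,c}$, identify $\bar M(\sigma)\cong S(\fh^*)_W\otimes V_\sigma$ with $V_\sigma$ in the extreme degree, use Frobenius reciprocity for induction from $B=\bC[W]\ltimes S(\fh)_W$, and detect simples through their nonzero $S(\fh)_W^+$-annihilated vectors; parts of the argument (the adjunction, the nonvanishing of $L^{S(\fh)_W^+}$, the surjection $\bar M(\sigma)\to L$) are complete as written.

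Two steps are thinner than they should be. (i) Gradedness of the Jacobson radical gives you that every maximal submodule of $\bar M(\sigma)$ contains the graded submodule $J\bar M(\sigma)$ (the module radical), so uniqueness of the simple quotient is equivalent to simplicity of the head $H=\bar M(\sigma)/J\bar M(\sigma)$ as an \emph{ungraded} module. Your top-degree argument only controls \emph{graded} submodules: it shows a proper graded submodule misses degree $0$. To finish you still need (a) complete reducibility in the graded category over the graded semisimple algebra $\bar\bfH_{0,c}/J$, so that $H$ is a direct sum of graded-simples --- and then it must be graded-simple, since two or more summands would all be proper graded submodules missing degree $0$, contradicting $H_0\cong V_\sigma\neq 0$ --- and (b) the lemma that a finite-dimensional graded-simple module over a finite-dimensional $\bZ$-graded $\bC$-algebra is simple as an ungraded module. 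These are standard and true (they are what the cited source relies on), but the sentence ``the top-weight argument applies directly'' elides precisely them. (ii) The pairwise non-isomorphism should not be argued via the grading being ``preserved up to shift'' under an abstract module isomorphism, which is not automatic; use instead an intrinsic invariant. In $\bar L(\sigma)$ the subspace $\{v: y\cdot v=0 \text{ for all } y\in\fh\}$ is graded (common kernel of homogeneous operators), and any homogeneous $\fh$-annihilated vector in negative degree would generate, via $S(\fh^*)_W\bC[W]$, a nonzero submodule missing degree $0$, contradicting simplicity; hence this subspace is exactly the image of $V_\sigma$. Since it is defined without reference to the grading, $\bar L(\sigma)\cong\bar L(\tau)$ forces $\sigma\cong\tau$. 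With these two repairs your argument is correct and coincides with the proof in \cite{Go}.
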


As a consequence (\cite[\S5.4]{Go}), the map 
\begin{equation}\label{e:Theta}
\Theta:\Irr(W)\to \Upsilon^{-1}(0)=\Spec Z(\bar\bfH_{0,c}),
\end{equation}
 given by mapping $\bar M(\sigma)$ to its central character, is surjective.

\begin{corollary}[{\cite[Corollary 5.10]{Ci}}]\label{c:zeta=theta} The morphism $\zeta^*_{0,c}:\Irr(W)\to \Upsilon^{-1}(0)$ from (\ref{e:CM}) is the determinant dual of the morphism $\Theta$ from (\ref{e:Theta}), i.e.,
\[\Theta(\sigma)=\zeta_{0,c}^*(\sigma\otimes\det),\text{ for all }\sigma\in \Irr(W).\]
\end{corollary}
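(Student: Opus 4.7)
The plan is to apply Theorem~\ref{t:vogan-conj} to $X = \bar M(\sigma)$, whose central character is $\Theta(\sigma)$ by definition of $\Theta$, and to produce an explicit copy of the $W$-representation $\sigma \otimes \det$ inside $H_D(\bar M(\sigma))$.

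Equip $\bar M(\sigma) \cong S(\fh^*)_W \otimes V_\sigma$ with its natural grading, with $V_\sigma$ in the extremal graded piece, and grade the spin module $S = \bigwedge \fh$ in the compatible way (placing $\bigwedge^k \fh$ in degree $-k$) so that $\C D = \sum_i x_i \otimes y_i + \sum_i y_i \otimes x_i$ is homogeneous of total degree zero on $\bar M(\sigma) \otimes S$. On the extremal piece $V_\sigma \otimes \bigwedge^{\dim \fh}\fh$ both summands of $\C D(v \otimes \omega)$ vanish for trivial reasons: in $\sum_i x_iv \otimes y_i\omega$ the factor $y_i \wedge \omega$ vanishes because $\omega$ is a top exterior power, and in $\sum_i y_i v \otimes x_i \omega$ the factor $y_i v$ vanishes by the defining relation $\fh \cdot V_\sigma = 0$ of the baby Verma. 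Being at an extremal total degree for the degree-preserving operator $\C D$, this subspace cannot meet $\im \C D$, hence embeds into $H_D(\bar M(\sigma))$.

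The next step is to identify the $\Delta(W) = \{w \otimes \tau_w\}$-type of $V_\sigma \otimes \bigwedge^{\dim \fh}\fh$. Using Lemma~\ref{l:W-action} to unwind the Pin-twisted action of $W$ on the top exterior power $\bigwedge^{\dim \fh}\fh$, and combining with the natural $W$-action on $V_\sigma$, one obtains a one-dimensional character which, after tracking all $\det_\fh$ factors, combines with $\sigma$ to give a copy of $\sigma \otimes \det$. Theorem~\ref{t:vogan-conj} then forces $\Theta(\sigma) = \zeta^*_{0,c}(\sigma \otimes \det)$, which is the desired identity.

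The main obstacle is the third step: one must carefully account for the determinant factors coming from the formula in Lemma~\ref{l:W-action} together with the natural action of $W$ on $\bigwedge^{\dim \fh}\fh$. The $\det$ twist in the statement of the corollary is exactly the residual twist after this bookkeeping; getting this identification right (as opposed to, say, $\sigma$ or $\sigma \otimes \det^{\pm 2}$) is the only delicate point of the argument, the rest being a direct degree-and-grading computation in $\bar M(\sigma) \otimes S$.
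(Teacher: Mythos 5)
Your overall strategy --- exhibit an extremal Dirac-harmonic subspace of $\bar M(\sigma)\otimes S$ and invoke Theorem~\ref{t:vogan-conj} --- is reasonable, and your kernel/image argument is fine: $\C D$ is homogeneous of degree $0$ for your grading, the piece $V_\sigma\otimes\bigwedge^{\dim\fh}\fh$ is the entire lowest graded component, it is killed by $\C D$, hence it injects into $H_D(\bar M(\sigma))$. The genuine gap is exactly at the step you defer as ``the only delicate point''. By Lemma~\ref{l:W-action}, $\tau_w$ acts on $\bigwedge^\ell\fh$ by ${\det}_\fh(w)$ times the natural action, so on the top exterior power it acts by ${\det}_\fh(w)^2=1$ (here $W$ is a Weyl group); consequently the $\Delta(W)$-isotype of $V_\sigma\otimes\bigwedge^{\dim\fh}\fh$ is $\sigma$, \emph{not} $\sigma\otimes\det$. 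Run with the conventions you adopt (the paper's displayed baby Verma, with $S(\fh)_W$ acting by $0$ on $V_\sigma$, and the spin module on $\bigwedge\fh$), your argument therefore proves $\Theta(\sigma)=\zeta^*_{0,c}(\sigma)$, which is a different statement; nor can the twist be recovered from the other extremal piece, since the top of the coinvariant algebra carries $\det$, and $\det\otimes\sigma\otimes\det\cong\sigma$ again. A rank-one check makes this concrete: for $W=\bZ/2$ with $[y,x]=-\kappa s$, one computes directly that $H_D(\bar M(\triv))\cong\triv\oplus\triv$, while $\zeta_{0,c}(z)=-\tfrac{\kappa}{2}s$ for the central element $z=xy-\tfrac{\kappa}{2}s$, so the method gives $\Theta(\triv)=\zeta^*_{0,c}(\triv)$, and since $\zeta^*_{0,c}(\triv)\neq\zeta^*_{0,c}(\sgn)$ when $\kappa\neq 0$, the asserted conclusion $\Theta(\triv)=\zeta^*_{0,c}(\triv\otimes\det)$ does not follow from your computation. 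The determinant twist is precisely a matter of which of $\fh$, $\fh^*$ is declared to act by zero on the inducing representation: if the baby Verma is instead induced with $S(\fh^*)$ acting by $0$ (so it is $S(\fh)_W\otimes V_\sigma$), the Dirac-harmonic extremal piece becomes $V_\sigma\otimes\bigwedge^0\fh$, on which $\tau_w$ acts by $\det(w)$, and one does obtain $\sigma\otimes\det$. So your proof must either adopt and justify that convention (reconciling it with the definition of $\Theta$ used here) or otherwise produce the twist; as written, the decisive identification is asserted rather than proved, and for the setup you chose it comes out wrong.

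A secondary point: Theorem~\ref{t:vogan-conj} is stated for modules on which $Z(\bfH_{0,c})$ acts by scalars, whereas $\bar M(\sigma)$ is only indecomposable, so the center may act through a generalized character. This is easily repaired: replace $\bar M(\sigma)$ by its simple quotient $\bar L(\sigma)$, which has honest central character $\Theta(\sigma)$ and is still graded with lowest piece $V_\sigma$ annihilated by the relevant polynomial subalgebra, so the same extremal-degree argument applies verbatim. But this fix does not address the isotype problem above, which is the real obstruction.
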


\begin{remark}
The partition of $\Irr(W)$ according to the fibers of the map $\Theta$ is called the Calogero-Moser (CM) partition. By Corollary \ref{c:zeta=theta}, we know that this partition is the same as the Dirac partition, i.e., the one given by $\zeta_{0,c}^*.$ This allows us to use Theorem \ref{t:vogan-conj} in order to study the CM partition. 
\end{remark}

\section{Classification of one-W-type modules}\label{sec:3}
In order to apply Theorem \ref{t:vogan-conj}, we construct modules for $\bfH_{0,c}$ for which the Dirac cohomology is easy to compute.

\subsection{Definition} Let $X$ be a simple $\bfH_{0,c}$-module. We consider simple modules $X$ such that the restriction of $X$ to $W$ is an irreducible $W$-representation. In the setting of graded affine Hecke algebras, this type of modules where introduced and classified in \cite{BM}, and studied further in \cite{CM}.

Given an irreducible $W$-representation $(\sigma,U_\sigma)$, the question is how to define the action of $\fk h$ and $\fk h^*$ in such a way that the commutation relations in $\bfH_{0,c}$ are preserved. Suppose $(\pi,X)$ is a one-$W$-type $\bfH_{0,c}$-module extending $\sigma$. As in \cite[Appendix]{CM}, consider the space 
\[
\fh\cdot U_\sigma=\{\pi(y)u: y\in\fh,\ u\in U_\sigma\}.\]
Since $w\cdot y\cdot w^{-1}=w(y)$, the space $\fh\cdot U_\sigma$ is $W$-invariant. Moreover, it is a constituent of the $W$-representation $\refl\otimes \sigma$, where $\refl$ is the reflection representation of $W$ on $\fk h.$ Since the restriction of $X$ to $W$ is irreducible, either $\fh\cdot U_\sigma=0$ or else $\fh\cdot U_\sigma\cong U_\sigma$ as $W$-representations. The latter case implies that
\begin{equation}\label{e:cond}
\Hom_{W}[\sigma,\sigma\otimes\refl]\neq 0.
\end{equation}
A similar analysis applies to $\fh^*\cdot U_\sigma.$

\begin{remark}\label{r:CiMo}
When $W$ is a finite Weyl group, \cite[Appendix]{CM} determines all $W$-types $\sigma$ that satisfy condition (\ref{e:cond}). When the long Weyl group element $w_0$ is central in $W$, an easy argument implies that no such $\sigma$ exists. When $w_0$ is not central, i.e., types $A_{n-1}$, $D_{2n+1}$, or $E_6$, there exist representations $\sigma$ satisfying (\ref{e:cond}). However, in the setting of the rational Cherednik algebra (unlike that of the graded affine Hecke algebra) we already know that for example in type $A$, no one-$W$-type modules can exist \cite{EG}.
\end{remark}

From now on, we assume that $W$ is a finite Weyl group. In light of Remark \ref{r:CiMo} and the discussion preceding it, we make the following definition.

\begin{definition}\label{d:one-type}
A simple $\bfH_{0,c}$-module $(\pi,X)$ is called a one-$W$-type module if 
\begin{enumerate}
\item the restriction of $X$ to $W$ is irreducible, and 
\item $\pi(y)=0=\pi(x)$ for all $y\in\fh$, $x\in\fh^*.$
\end{enumerate}
Condition (2) is automatic when $w_0$ is central in $W$.
\end{definition}

\subsection{Criterion} We determine which irreducible $W$-representations give rise to one-$W$-type modules in the sense of Definition \ref{d:one-type}. Let $R\subset \fh^*$ denote the set of roots, $R^+$, the set of positive roots, $\Pi$ the set of simple roots. Let $R^\vee$, $R^{+,\vee}$, and $\Pi^\vee$ denote the corresponding coroots in $\fh.$ If $\al\in R$ is a root, let $\al^\vee$ be the corresponding coroot normalized such that $\langle\al,\al^\vee\rangle=2.$ The (renormalized) basic commutation relation in $\bfH_{0,c}$ becomes
\begin{equation}\label{e:comm}
[y,x]=-\sum_{\al>0} c_\al\langle y,\alpha\rangle\langle\al^\vee,x\rangle s_\al,\quad y\in\fh,\ x\in\fh^*.
\end{equation}

\begin{lemma}
An irreducible $W$-representation $\sigma$ extends to a one-$W$-type $\bfH_{0,c}$-module if and only if 
\begin{equation}\label{e:crit}
\sum_{\al>0} c_\al \langle y,\alpha\rangle\langle\al^\vee,x\rangle \sigma(s_\al)=0,
\end{equation}
for all $y\in\fh$ and $x\in\fh^*.$
\end{lemma}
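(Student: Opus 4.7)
The plan is to verify that the criterion (\ref{e:crit}) is precisely the obstruction to extending $\sigma$ to a one-$W$-type module in the sense of Definition \ref{d:one-type}. Since condition (2) of that definition forces $\pi(y)=\pi(x)=0$ on any such extension, the argument reduces to a direct check of which defining relations of $\bfH_{0,c}$ remain nontrivial under this constraint.

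For the necessary direction, I would start from a one-$W$-type module $(\pi,X)$ with $X|_W\cong U_\sigma$. By Definition \ref{d:one-type}(2), we have $\pi(y)=0=\pi(x)$, hence $\pi([y,x])=0$ on $U_\sigma$. Applying $\pi$ to the renormalized commutation relation (\ref{e:comm}) and using that $\pi(s_\al)=\sigma(s_\al)$ immediately yields (\ref{e:crit}).

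For the sufficient direction, given an irreducible $W$-representation $\sigma$ satisfying (\ref{e:crit}), I would define a candidate action $\pi\colon \bfH_{0,c}\to\End(U_\sigma)$ by $\pi(w)=\sigma(w)$ for $w\in W$, together with $\pi(y)=0$ and $\pi(x)=0$ for all $y\in\fh$ and $x\in\fh^*$. To show $\pi$ descends from $T(V)\rtimes W$ to $\bfH_{0,c}$, I would go through the defining relations one by one: the relations $[y_1,y_2]=0$ and $[x_1,x_2]=0$ and the cross relations $wyw^{-1}=w(y)$, $wxw^{-1}=w(x)$ all become $0=0$ tautologically, so the only substantive check is the $[y,x]$ commutation relation. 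The left-hand side acts as zero, while the right-hand side acts as $-\sum_{\al>0}c_\al\langle y,\al\rangle\langle\al^\vee,x\rangle\sigma(s_\al)$, which vanishes by hypothesis. Irreducibility of $\pi$ as an $\bfH_{0,c}$-module is automatic: any $\bfH_{0,c}$-stable subspace of $U_\sigma$ is in particular $W$-stable, and $U_\sigma$ is $W$-irreducible.

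There is no real obstacle in this argument beyond bookkeeping; the only mild subtlety is being careful about the normalization of the parameter $c_\al$ passing between the definition of $\bfH_{0,c}$ and the renormalized form (\ref{e:comm}), so that the factor $\langle\al_s^\vee,\al_s\rangle$ is correctly absorbed into $c_\al$ and no factor of $1-\lambda_s$ survives (here $\lambda_s=-1$ since $W$ is a finite Weyl group). The interesting content of the lemma is really that no \emph{other} condition arises — the absence of relations between the $x$'s or between the $y$'s means the full obstruction is concentrated in the single mixed relation, which explains why (\ref{e:crit}) alone suffices.
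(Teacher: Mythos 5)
Your argument is correct and is exactly the content the paper compresses into its one-line proof (``straightforward by Definition \ref{d:one-type} and formula (\ref{e:comm})''): necessity by applying $\pi$ to the mixed commutation relation, sufficiency by checking that with $\pi(y)=\pi(x)=0$ the only non-tautological relation is (\ref{e:crit}), with irreducibility inherited from $W$. No discrepancy with the paper's approach.
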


\begin{proof}
This is straightforward by Definition \ref{d:one-type} and formula (\ref{e:comm}).
\end{proof}

In order to carry out the computations effectively, we adapt a reduction criterion from \cite[Proposition 2.4]{BM}. 

\begin{proposition}\label{p:reduction} Suppose $y\in \fh$ and $x_1,x_2\in\fh^*$ are such that 
\begin{enumerate}
\item[(a)] $W\cdot y$ spans $\fh$ (modulo the $W$-invariants), and
\item[(b)] $\{Z_W(y)\cdot x_1,x_2\}$ span $\fh^*$.
\end{enumerate}
The $W$-type $\sigma$ extends to a one-$W$-type module $(\pi,X)$ if and only if equation (\ref{e:crit}) holds for the pairs $(y,x_1)$ and $(y,x_2)$.
\end{proposition}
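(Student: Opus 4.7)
The plan is to prove the non-trivial direction (sufficiency) by encoding the left-hand side of (\ref{e:crit}) as a bilinear map
\[
F\colon\fh\times\fh^*\to\End(U_\sigma),\qquad F(y,x)=\sum_{\al>0} c_\al\,\langle y,\al\rangle\,\langle\al^\vee,x\rangle\,\sigma(s_\al),
\]
and showing that the hypotheses $F(y,x_1)=F(y,x_2)=0$, together with (a) and (b), force $F\equiv 0$. The preceding lemma will then supply the desired one-$W$-type extension. The converse direction is immediate.

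First I would establish the $W$-equivariance
\[
\sigma(w)\,F(y',x')\,\sigma(w)^{-1}=F(wy',wx'),\qquad w\in W,\ y'\in\fh,\ x'\in\fh^*.
\]
This is to be derived from $ws_\al w^{-1}=s_{w\al}$, the $W$-invariance of $c$, and the substitution $\al\mapsto w^{-1}\al$ inside the sum. The one subtlety is that the reindexed sum a priori ranges over $w(R^+)$ rather than $R^+$; this will be absorbed using $s_{-\al}=s_\al$ and $c_{-\al}=c_\al$, so that the sign reversals on $\langle y,\cdot\rangle$ and $\langle \cdot^\vee,x\rangle$ cancel in pairs and the sum is rewritten in the variable $w\al\in R^+$.

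Once equivariance is in place, I propagate the vanishing in two steps. Applying equivariance with $w\in Z_W(y)$ to $F(y,x_1)=0$ yields $F(y,wx_1)=0$; together with $F(y,x_2)=0$ and hypothesis (b), this forces $F(y,x)=0$ for every $x\in\fh^*$. Next, the subspace $K=\{y'\in\fh:F(y',\,\cdot\,)\equiv 0\}$ is $W$-stable by equivariance, and we have just shown $y\in K$, so $K\supseteq\bC[W]\cdot y$. For any $y'\in\fh^W$ every reflection $s_\al$ fixes $y'$, which forces $\langle y',\al\rangle=0$ for all roots and hence $F(y',\,\cdot\,)\equiv 0$ automatically; thus $\fh^W\subseteq K$. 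Hypothesis (a) then yields $K=\fh$, and therefore $F\equiv 0$.

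The main obstacle I anticipate is the equivariance bookkeeping in the first step: correctly tracking the sign cancellations when reindexing over $R^+$ under the change of variable $\al\mapsto w^{-1}\al$. Beyond that the propagation is routine linear algebra for a $W$-equivariant bilinear form.
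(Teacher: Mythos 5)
Your proposal is correct and follows essentially the same route as the paper: the equivariance $\sigma(w)F(y',x')\sigma(w)^{-1}=F(wy',wx')$ is exactly the paper's conjugation identity $[w(y'),w(x')]=w\cdot[y',x']\cdot w^{-1}$, and the propagation (first over $Z_W(y)\cdot x_1$ and $x_2$ using (b), then over $W\cdot y$ using (a), with the $W$-invariants handled trivially) matches the paper's argument.
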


\begin{proof}
Without loss of generality, assume $R$ spans $\fh^*$. Suppose $\pi([y,x_1])=0$ and $\pi([y,x_2])=0.$ Let $y'\in\fh$ and $x'\in fh^*$ be arbitrary and we want to show that $\pi([y',x'])=0$. For every $w\in W$, we have 
\[ [w(y'),w(x')]=w\cdot [y',x']\cdot w^{-1}.\]
Since $\pi([y,x_1])=0$ it follows from this that $\pi([y,w(x_1)])=0$ for all $w\in Z_W(y)$. From (b), it follows that $\pi([y,x'])=0$ for all $x'\in\fh^*.$ But then also $\pi([w(y),x'])=0$ for all $w\in W$ and $x'\in\fh^*$, and the conclusion follows from (a).
\end{proof}

\subsection{Classification}
To describe the classification result, let us recall first the parameterization of irreducible $W$-representations. If $W=S_n$ is of type $A_{n-1}$, the $W$-types are parameterized by partitions $\lambda$ of $n$; we write $(\lambda)$ for the corresponding representation. In this notation, $(n)$ denotes the trivial representation and $(1^n)$ the sign representation. We also denote by $\lambda^t$ the transpose partition to $\lambda.$

If $W$ is of type $B_n$ (equivalently $C_n$), then $W_n=S_n\ltimes (\bZ/2\bZ)^n$ and the irreducible $W$-representations are parameterized by bipartitions $(\lambda,\mu)$ of $n$ via Mackey induction. We denote by $(\lambda)\times (\mu)$ the $W$-type
\[(\lambda)\times(\mu)=\Ind_{S_{n-k}\times S_k\times (\bZ/2\bZ)^n}^{S_n}((\lambda)\otimes (\mu)\otimes \triv^{\otimes(n-k)}\otimes\sgn^{\otimes k}).
\]
If $R$ is of type $D_n$, $W(D_n)$ is an index 2 subgroup of $W(B_n)$ and the irreducible $W(D_n)$-representations are obtained by restriction from $W(B_n)$. If $\lambda\neq\mu$, then $(\lambda)\times (\mu)$ and $(\mu)\times (\lambda)$ restrict to the same $W(D_n)$-representation, which we denote again by $(\lambda)\times(\mu).$ When $\lambda=\mu$, the restriction of the $W(B_n)$-type $(\lambda)\times (\lambda)$ to $W(D_n)$ splits into a sum of two equidimensional, non isomorphic representations, denoted $(\lambda)\times (\lambda)_I$ and $(\lambda)\times (\lambda)_{II}$. 

When $W$ is of exceptional type, we will use the notation of Kondo for $F_4$ and Frame for $E_6,E_7,E_8$, see \cite{Lu}, and Carter's notation for $G_2$ \cite{Ca}. 

\smallskip

If the parameter function $c$ for the rational Cherednik algebra $\bfH_{0,c}$ is identically zero, then every $W$-type extends trivially to an $\bfH_{0,c}$-module. We assume this is not the case from now on. Thus, without loss of generality, we may assume $c$ is identically $1$ if the root system is simple and simply-laced. If the simple root system $R$ has two root lengths, we denote by $c_s$ the parameter on the short roots and by $c_\ell$ the parameter on the long roots. When the algebra if of type $B/C$, this convention refers to the type $B_n$ root system.

\begin{theorem}\label{t:class} Let $R$ be a simple root system. The irreducible $W$-representations that extend to a one-$W$-type $\bH_{0,c}$-module in the sense of Definition \ref{d:one-type} are:

\begin{enumerate}
\item $\mathbf{A_{n-1}}$ or $\mathbf{E_7}$: none;
\item $\mathbf{B_n}$: the $W$-representations of the form $(\lambda)\times (0)$ and $(0)\times (\lambda^t)$, where $\lambda$ is  a rectangular partition of $n$ of the form $\lambda=(\underbrace{d,d,\dots,d}_{k})$ with $$k-d=c_s/c_\ell;$$
\item $\mathbf{D_n}$: the $W$-representations of the form $(\lambda)\times (0)$ where $\lambda$ is  a rectangular partition of $n$ of the form $\lambda=(\underbrace{d,d,\dots,d}_{d})$ (so $n=d^2$);
\item $\mathbf {E_6}$: the representation $10_s$;
\item $\mathbf{E_8}$: the representations $168_y$ and $420_y$;
\item $\mathbf{G_2}$: 
\begin{enumerate}
\item $\phi_{1,3}'$, $\phi_{1,3}''$, and $\phi_{2,2}$ when $c_s/c_\ell=1$;
\item $\phi_{1,0}$, $\phi_{1,6}$, and $\phi_{2,1}$ when $c_s/c_\ell=-1$;
\end{enumerate}
\item $\mathbf{F_4}$: 
\begin{enumerate}
\item  $4_1$ for all values of the parameters $c_s,c_\ell$;
\item  $1_2,1_3,6_1,4_3,4_4$ when $c_s/c_\ell=1$;
\item  $1_1,1_4,6_2,4_2,4_5$ when $c_s/c_\ell=-1$;
\item $2_1,2_2$ when $c_\ell=0$;
\item  $2_3,2_4$ when $c_s=0$;
\item $12_1, 16_1$ when $c_s/c_\ell=\pm\sqrt{-1}$.
\end{enumerate}

\end{enumerate}

\end{theorem}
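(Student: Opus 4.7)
The plan is to turn the criterion of the preceding Lemma into an effective finite check via Proposition \ref{p:reduction}. For each simple root system I would pick $y\in\fh$ to be (a representative of) a fundamental coweight, so that $Z_W(y)$ is a maximal parabolic subgroup and only a small $Z_W(y)$-orbit of positive roots satisfies $\langle y,\alpha\rangle\neq 0$. Proposition \ref{p:reduction} then reduces the vanishing of (\ref{e:crit}) to the vanishing on $U_\sigma$ of the two endomorphisms
\[T_{y,x_i}=\sum_{\alpha>0} c_\alpha\langle y,\alpha\rangle\langle\alpha^\vee,x_i\rangle\sigma(s_\alpha),\quad i=1,2,\]
for two well-chosen $x_1,x_2\in\fh^*$. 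Each $T_{y,x_i}$ is then a sparse sum of only a few reflections, and the classification becomes a case-by-case verification.

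In type $A_{n-1}$, with $y$ the first fundamental coweight, $T_{y,x}$ reduces (up to a scalar) to a Jucys--Murphy-type element $\sum_{j\geq 2}\sigma(s_{1j})$, which by Young's orthogonal form acts with nonzero content sum on the $S_{n-1}$-isotypic components of any $U_\sigma$; hence no $\sigma$ satisfies the criterion, recovering \cite{EG}. For types $B_n$ and $D_n$, the analogous computation separates into a $c_s$-weighted sum (from short roots, present in type $B$ only) and a $c_\ell$-weighted sum (from long roots), and the criterion becomes a system of linear relations among eigenvalues of hyperoctahedral Jucys--Murphy operators on bipartitions $(\lambda,\mu)$. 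Working through the Young--Specht model, these force one of $\lambda,\mu$ to be empty and the other to be rectangular, with the shape governed by the balance $k-d=c_s/c_\ell$ (and $k=d$ when the short-root contribution drops out, as in type $D$); the sign-twist $(0)\times(\lambda^t)$ arises from the symmetry exchanging the short- and long-root contributions.

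For the exceptional groups $G_2$, $F_4$, $E_6$, $E_7$, and $E_8$ the argument is purely computational: for each irreducible $\sigma$ in the (finite) list, I would use the explicit character tables and reflection data of \cite{Lu,Ca} to assemble the matrices $T_{y,x_i}\in\End(U_\sigma)$ and test their vanishing as polynomials in $c_s,c_\ell$. The distinguished parameter values $c_s/c_\ell=\pm 1, \pm\sqrt{-1}$ in $F_4$ and $G_2$, and the degenerate cases $c_s=0$ or $c_\ell=0$, arise precisely as the loci on which these polynomials vanish identically on $U_\sigma$. The main obstacle is the size of this exceptional case-check, most dramatically for $E_7$ (where one must rule out \emph{all} irreducible representations) and $E_8$ (where one must pick out exactly the two representations $168_y$ and $420_y$ from among $112$); these are most efficiently handled with computer algebra (CHEVIE/GAP), though each individual verification is an elementary linear-algebra computation on a single $U_\sigma$.
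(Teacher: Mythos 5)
Your overall strategy (the vanishing criterion plus Proposition \ref{p:reduction}, classical types via symmetric-group combinatorics, exceptional types by computer) matches the paper's, but your type $A_{n-1}$ step is wrong as stated, and this is a genuine gap. With $y=\ep_1^\vee$ the first test operator is indeed the Jucys--Murphy-type element $\sum_{j\ge 2}\sigma(s_{1j})$, but it is \emph{not} nonzero on every irreducible: it acts on each $S_{n-1}$-isotypic component of the Specht module $S^\lambda$ by the content of the removed box, so it vanishes identically precisely when $\lambda$ has a unique removable corner of content $0$, i.e.\ when $\lambda$ is a $d\times d$ square (for example $\sigma(s_{12})+\sigma(s_{13})+\sigma(s_{14})=0$ on $S^{(2,2)}$). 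This is exactly the rectangle phenomenon you exploit in types $B$ and $D$, so the first condition alone cannot rule out type $A$. What eliminates type $A$ --- and this is how the paper argues --- is the second test pair: $[\ep_1^\vee,\ep_2]$ is a nonzero multiple of the single reflection $s_{\ep_1-\ep_2}$, so the criterion forces $\sigma(s_{\ep_1-\ep_2})=0$, which is impossible since $s_{\ep_1-\ep_2}$ is an involution. Without this step your argument would wrongly leave the square partitions in play. (A minor related slip: in type $B_n$ the family $(0)\times(\lambda^t)$ comes from tensoring $(\lambda)\times(0)$ with $\sgn$, which negates the whole criterion sum, not from exchanging the short- and long-root contributions.)

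A second, practical gap concerns the exceptional groups. You propose to assemble the matrices $T_{y,x_i}\in\End(U_\sigma)$ for every irreducible of $W(E_7)$ and $W(E_8)$, but character tables (which is what CHEVIE readily supplies) do not give these matrices, and explicit models for, say, the $7168$-dimensional representation of $W(E_8)$ are not an off-the-shelf input. The paper sidesteps this with the key observation (Proposition \ref{p:reduction-2}) that each $\sigma(s_\al)$ can be realized by a real symmetric matrix, so $\sigma([y,x_i])$ is symmetric and vanishes if and only if $\chi_\sigma([y,x_i]^2)=0$; expanding $[y,x_i]^2$ into products $s_\al s_\beta$ and sorting the pairs by the rank-two subsystem they generate reduces the entire exceptional check to character values on a handful of classes ($1$, $2A_1$, $A_1+\wti A_1$, $A_2$, $\wti A_2$, $B_2$, $G_2$). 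You need this trace reduction (or an equivalent) for your exceptional case-check, including the determination of the special parameter loci in $F_4$ and $G_2$, to be actually executable from the data you cite.
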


In the rest of the section, we present the proof of Theorem \ref{t:class}.

\subsection{Classical types} We write the roots of root system of classical types in the standard coordinates. For simplicity, we work with the algebra of type $gl(n)$ in type $A$. Let $\{\ep_1,\dots,\ep_n\}$ be the standard coordinates for $\fh^*$ and 
$\{\ep_1^\vee,\dots,\ep_n^\vee\}$ the dual coordinates for $\fh.$ The positive roots are denoted as follows:
\begin{enumerate}
\item $\mathbf {A_{n-1}}$: $\ep_i-\ep_j$, $1\le i<j\le n$;
\item $\mathbf {B_n}$: $\ep_i\pm \ep_j$, $1\le i<j\le n$, and $\ep_i$, $1\le i\le n$;
\item $\mathbf{D_n}$: $\ep_i\pm\ep_j$, $1\le i<j\le n$. 
\end{enumerate}
In the notation of Proposition \ref{p:reduction}, we choose in all cases
\begin{equation}
y=\ep_1^\vee,\quad x_1=\ep_1,\quad x_2=\ep_2.
\end{equation}
Suppose $\sigma$ is a $W$-type that can be extended to a one-$W$-type module. The two conditions from Proposition \ref{p:reduction} imply in type $A_{n-1}$:
\begin{align}
&\sum_{i=2}^n \sigma(s_{\ep_1-\ep_i})=0\text{ and }\sigma(s_{\ep_1-\ep_2})=0.
\end{align}
But clearly the second condition is impossible, so there are no such $S_n$-representations.

\medskip

Suppose that we are in type $B_n$ now. The two conditions become
\begin{equation}
\begin{aligned}
&c_\ell\sum_{i=1}^n(\sigma(s_{\ep_1-\ep_i})+\sigma(s_{\ep_1+\ep_i}))+2 c_s \sigma(s_{\ep_1})=0;\\
&c_\ell(-\sigma(s_{\ep_1-\ep_2})+\sigma(s_{\ep_1+\ep_2}))=0.
\end{aligned}
\end{equation}
If $c_\ell=0$, the two conditions imply that $c_s\sigma(s_{\ep_1})=0$, which is impossible unless $c_s=0$. So suppose $c_\ell\neq 0$, and set $c=c_s/c_\ell$. The second condition implies that $\sigma(s_{\ep_1-\ep_2})=\sigma(s_{\ep_1+\ep_2})$ and by conjugation that \[\sigma(s_{\ep_i-\ep_j})=\sigma(s_{\ep_i+\ep_j})=\sigma(s_{\ep_j})\sigma(s_{\ep_i-\ep_j})\sigma(s_{\ep_j}),\] for all $i<j$. But this means that every $\sigma(s_{\ep_j})$ preserves with the restriction of $\sigma$ to $S_n$, and since $\sigma$ is irreducible, it must be of the form $(\lambda)\times (0)$ or $(0)\times (\lambda^t).$

The two cases are dual to each other via tensoring with the sign representation, so it is sufficient to treat the first one. If $\sigma=(\lambda)\times (0)$, then $\sigma(s_{\ep_i})=\Id$ for all $i$ and $\sigma(s_{\ep_i-\ep_j})=(\lambda)(s_{\ep_i-\ep_j}).$ The condition is then one in $S_n$:
\begin{equation}\label{e:harmonic}
\sum_{i=2}^n (\lambda)(s_{\ep_1-\ep_i})+c\cdot \Id=0.
\end{equation}
A similar condition appears in \cite{BM} in the setting of the graded Hecke algebra of type $A$, see the proof of Theorem 3.15, particularly (3.18) in \cite{BM}. One can compute it using the realization of $S_n$-representations in harmonic polynomials and the result is that (\ref{e:harmonic}) holds if and only if $\lambda$ is a rectangular partition $\lambda=(\underbrace{d,\dots,d}_k)$ satisfying
\[c=k-d.\]

\medskip

In type $D_n$, the two necessary and sufficient conditions are:
\begin{equation}
\begin{aligned}
&\sum_{i=1}^n(\sigma(s_{\ep_1-\ep_i})+\sigma(s_{\ep_1+\ep_i}))=0;\\
&-\sigma(s_{\ep_1-\ep_2})+\sigma(s_{\ep_1+\ep_2})=0.
\end{aligned}
\end{equation}
The discussion in this case is identical with the case $B_n$ with $c_\ell=1$ and $c_s=0.$

\subsection{Exceptional types} For the exceptional simple root systems, we would like to use characters of representations (rather than the representations themselves), so we need to adapt Proposition \ref{p:reduction}. The idea is the same as in \cite[Proposition 4.1]{BM}. If $\sigma$ is a $W$-representation, let $\chi_\sigma$ denote its character. 

\begin{proposition}\label{p:reduction-2}
Let $\sigma$ be an irreducible $W$-representation and $y,x_1,x_2$ as in Proposition \ref{p:reduction}. Then $\sigma$ extends to a one-$W$-type $\bfH_{0,c}$-module if and only if
\[\chi_{\sigma}([y,x_i]^2)=0,\quad i=1,2.\]
\end{proposition}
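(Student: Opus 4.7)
The plan is to reduce the statement to a question purely about the group algebra $\bC[W]$. By Proposition \ref{p:reduction}, it suffices to prove that for the element $a := [y, x_i] \in \bC[W]$ and an irreducible $W$-representation $\sigma$, the operator $\sigma(a)$ vanishes if and only if $\chi_\sigma(a^2) = \tr\sigma(a)^2 = 0$. The implication $\sigma(a) = 0 \Rightarrow \chi_\sigma(a^2) = 0$ is immediate, so the content is in the converse.

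For the converse the key input is unitarity. Since $W$ is finite, $V_\sigma$ carries a $W$-invariant positive-definite Hermitian form, with respect to which every reflection $s_\al$ acts as a self-adjoint operator (being an involution). Using the commutation relation (\ref{e:comm}),
\[
a = -\sum_{\al>0} c_\al \langle y, \al\rangle\langle\al^\vee, x_i\rangle s_\al,
\]
so whenever the scalars $c_\al\langle y,\al\rangle\langle\al^\vee, x_i\rangle$ are real, $\sigma(a)$ is a real linear combination of self-adjoint operators and hence itself self-adjoint. It is then diagonalizable with real eigenvalues $\lambda_1, \ldots, \lambda_{\dim\sigma}$, and one has $\chi_\sigma(a^2) = \sum_j \lambda_j^2 \geq 0$ with equality exactly when all $\lambda_j = 0$, i.e., exactly when $\sigma(a) = 0$. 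Combined with Proposition \ref{p:reduction}, this gives the statement under a mild reality assumption; in the classification that follows $y$ and the $x_i$ will be chosen as real coordinate vectors, so the assumption reduces to $c_\al \in \bR$.

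The step I expect to need extra care is handling non-real parameter values, such as the $c_s/c_\ell = \pm\sqrt{-1}$ case for $F_4$ in Theorem \ref{t:class}(7)(f), where $\sigma(a)$ need not be self-adjoint and the equality $\sum\lambda_j^2 = 0$ no longer forces $\sigma(a) = 0$. The natural remedy is to replace $a^2$ by $a a^*$, where $*$ is the standard anti-linear anti-involution on $\bC[W]$ sending $w\mapsto w^{-1}$: since $s_\al^{-1} = s_\al$, the element $a^*$ differs from $a$ only by complex conjugation of the scalar coefficients, so $a a^* = a^2$ whenever those coefficients are real, while in general $\chi_\sigma(a a^*)$ equals the Hilbert--Schmidt norm squared of $\sigma(a)$ and vanishes if and only if $\sigma(a) = 0$, with no reality hypothesis. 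Following the parallel \cite[Proposition 4.1]{BM} for the graded affine Hecke algebra, I expect this substitution to be the main technical adjustment needed to cover every case arising in Theorem \ref{t:class}.
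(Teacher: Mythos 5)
Your core argument is the paper's own argument in very slightly different clothing: the paper realizes each $\sigma(s_\al)$ by rational \emph{symmetric} matrices, so that $\sigma([y,x_i])$ is a real symmetric matrix with real eigenvalues $\nu_j$, and $\chi_\sigma([y,x_i]^2)=\sum_j\nu_j^2$ vanishes exactly when $\sigma([y,x_i])=0$; your self-adjointness with respect to a $W$-invariant Hermitian form is the same mechanism, and combined with Proposition \ref{p:reduction} it proves the proposition whenever the coefficients $c_\al\langle y,\al\rangle\langle\al^\vee,x_i\rangle$ are real.

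Your caveat about non-real parameters is well placed, and in fact applies equally to the paper's proof, which tacitly assumes the scalars are real when it asserts that $\sigma([y,x_i])$ is a real-valued symmetric matrix; for a complex symmetric matrix $A$, $\tr(A^2)=0$ does not force $A=0$, so neither argument establishes the ``if'' direction when, say, $c_s/c_\ell=\pm\sqrt{-1}$ as in Theorem \ref{t:class}(7)(f). Be aware, though, that your proposed remedy does not repair this: replacing $a^2$ by $aa^*$ proves a \emph{different} criterion (a perfectly valid one, since $\chi_\sigma(aa^*)$ is the Hilbert--Schmidt norm of $\sigma(a)$), not the statement $\chi_\sigma([y,x_i]^2)=0$ that the proposition asserts and that the paper actually uses in its character computations. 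Indeed the two criteria genuinely diverge for non-real $c$: writing $\sigma([y,x_i])=c_\ell A_i+c_s B_i$ with $A_i,B_i$ rational matrices (rational realization of $\sigma$, rational choices of $y,x_i$), vanishing at $c_s/c_\ell=\pm\sqrt{-1}$ would force $A_i=B_i=0$, i.e.\ extension at \emph{all} parameters, whereas the trace condition only requires $\tr(A_i^2)=\tr(B_i^2)$ and $\tr(A_iB_i)=0$. So your proof, like the paper's, settles the proposition for real parameters; for non-real parameters it does not prove the stated equivalence, and the ``if'' direction is genuinely in doubt there rather than being a technical adjustment away.
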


\begin{proof}
We need to show that $\sigma([y,x])=0$ if and only if $\chi_\sigma([y,x]^2)=0.$ We can realize $\sigma(s_\al)$ as a symmetric real-valued (in fact, rational) matrix for every $\al>0.$ This means that $\sigma([y,x])$ is also a real-valued symmetric matrix and therefore, it is diagonalizable and has real eigenvalues $\nu_i$, $i=1,\dim\sigma$. But then $\chi_\sigma([y,x]^2)=\sum_i \nu_i^2=0$ if and only if $\nu_i=0$ for all $i$. 
\end{proof}

The explicit calculations are as follows. For each exceptional simple root system, we choose $y,x_1,x_2$ and compute $[y,x_i]$, $i=1,2$, as combination of $s_\al$, $\al>0$. Next we compute the squares $[y,x_i]^2$ which are linear combination of terms $s_\al s_\beta$. For the character computation in Proposition \ref{p:reduction-2}, the only important feature is which rank $2$ root subsystem every such pair $\al,\beta$ forms. Thus we need to count all the occurrences of a given rank $2$ subsystem in this combination. We used the computer algebra systems ''Mathematica'' to carry out these straightforward, but tedious root calculations and  GAP version 3.4 \cite{GAP} with the package 'chevie' \cite{GHLMP} to compute the characters. 

We summarize the results next. We will denote by $w_{A_2}$, $w_{\wti A_2}$, $w_{2A_1}$, $w_{A_1+\wti A_1}$, $w_{B_2}$, $w_{G_2}$ representatives of the conjugacy classes of the corresponding rank $2$ root subsystems.

\medskip

Let $R$ be of type $G_2$. We choose the coordinates for the simple roots:
\begin{equation}
\al_s=(2/3,-1/3,-1/3),\quad \al_\ell=(-1,1,0),
\end{equation}
and 
\begin{equation}
y=(1,1,-2),\quad x_1=(1/3,1/3,-2/3),\quad x_2=(0,1,-1).
\end{equation}
Then $[y,x_1]$ and $[y,x_2]$ have $5$ and $4$ terms, respectively. Calculating the quantities in Proposition \ref{p:reduction-2}, we find that $\sigma$ extends to a one-$W$-type module if and only if $\chi_\sigma$ vanishes on
\begin{equation}
\begin{aligned}
&3(c_\ell^2+c_s^2) 1+(2 c_\ell c_s) w_{A_1+\wti A_1}+ 3(c_\ell^2+c_s^2) w_{A_2}+ (10 c_\ell c_s) w_{G_2},\text{ and}\\
&5(c_\ell^2+c_s^2) 1+(2 c_\ell c_s) w_{A_1+\wti A_1}+ 4(c_\ell^2+c_s^2) w_{A_2}+ (16 c_\ell c_s) w_{G_2}.
\end{aligned}
\end{equation}
Using the character table for $G_2$ (in GAP), we verify Theorem \ref{t:class} for $G_2$.

\medskip

Let $R$ be of type $F_4$. We choose the following coordinates for the simple roots:
\begin{equation}
\al_1=\ep_1-\ep_2-\ep_3-\ep_4,\ \al_2=2\ep_4,\ \al_3=\ep_3-\ep_4,\ \al_4=\ep_2-\ep_3
\end{equation}
and 
\begin{equation}
y=\ep_1^\vee,\quad x_1=\ep_1,\quad x_2=\ep_1+\ep_2+\ep_3+\ep_4.
\end{equation}
Then $[y,x_1]$ has $15$ terms and $[y,x_2]$ has $9$ terms. Calculating the squares, we find that $\sigma$ extends to a one-$W$-type module if and only if $\chi_\sigma$ vanishes on
\begin{equation}
\begin{aligned}
&(c_\ell^2+c_s^2)1+(c_\ell^2+c_s^2) w_{2A_1}+(4c_\ell c_s) w_{A_1+\wti A_1}+(4 c_\ell^2) w_{A_2}+(4 c_s^2) w_{\wti A_2}+(8 c_\ell c_s) w_{B_2};\\
&(c_\ell^2+c_s^2)1+(2 c_\ell^2) w_{A_2}+ (2 c_s^2) w_{\wti A_2}+ (6c_\ell c_s) w_{B_2}.
\end{aligned}
\end{equation}
Using GAP, we extract the character table of $W(F_4)$ for these conjugacy classes, and verify Theorem \ref{t:class} in this case.

\medskip

Let $R$ be of type $E_6$. We assume the parameters equal to $1$. The simple roots are
\begin{equation}
\begin{aligned}
&\al_1=\frac 12(1,-1,-1,-1,-1,-1,-1,1),\ \al_2=\ep_1+\ep_2, \al_3=-\ep_1+\ep_2,\\
&\al_4=-\ep_2+\ep_3,\ \al_5=-\ep_3+\ep_4,\ \al_6=-\ep_4+\ep_5.
\end{aligned}
\end{equation}
Let $\om_i,\om_i^\vee$ be the fundamental weights, respectively coweights. We choose
\begin{equation}
y=\om^\vee_2,\quad x_1=\om_2,\quad x_2=s_{\al_2}(x_1).
\end{equation}
Then $[y,x_1]$ has $21$ terms and $[y,x_2]$ has $12$ terms. Calculating the squares, we find that $\sigma$ extends to a one-$W$-type module if and only if $\chi_\sigma$ vanishes on
\begin{equation}
\begin{aligned}
1+5 w_{2A_1}+10 w_{A_2}\text{ and } 1+w_{2A_1}+6w_{A_2}.
\end{aligned}
\end{equation}
Using GAP, we extract the character table of $W(E_6)$ for the conjugacy classes $1$, $2A_1$ and $A_2$, and find that the only $W$-type whose character vanishes is $10_s$.

\medskip

Let $R$ be of type $E_7$. We assume the parameters equal to $1$. The simple roots are
\begin{equation}
\begin{aligned}
&\al_1=\frac 12(1,-1,-1,-1,-1,-1,-1,1),\ \al_2=\ep_1+\ep_2, \al_3=-\ep_1+\ep_2,\\
&\al_4=-\ep_2+\ep_3,\ \al_5=-\ep_3+\ep_4,\ \al_6=-\ep_4+\ep_5,\ \al_7=-\ep_5+\ep_6.
\end{aligned}
\end{equation}
We choose
\begin{equation}
y=\om^\vee_1,\quad x_1=\om_1,\quad x_2=s_{\al_1}(x_1).
\end{equation}
Then $[y,x_1]$ has $33$ terms and $[y,x_2]$ has $18$ terms. Calculating the squares, we find that $\sigma$ extends to a one-$W$-type module if and only if $\chi_\sigma$ vanishes on
\begin{equation}
\begin{aligned}
1+10 w_{2A_1}+16 w_{A_2}\text{ and } 2+5w_{2A_1}+20w_{A_2}.
\end{aligned}
\end{equation}
Using GAP, we extract the character table of $W(E_7)$ for the conjugacy classes $1$, $2A_1$ and $A_2$, and find that no $W$-characters vanish on these elements.

\medskip

Let $R$ be of type $E_8$. We assume the parameters equal to $1$. The simple roots are
\begin{equation}
\begin{aligned}
&\al_1=\frac 12(1,-1,-1,-1,-1,-1,-1,1),\ \al_2=\ep_1+\ep_2, \al_3=-\ep_1+\ep_2,\\
&\al_4=-\ep_2+\ep_3,\ \al_5=-\ep_3+\ep_4,\ \al_6=-\ep_4+\ep_5,\ \al_7=-\ep_5+\ep_6\ \al_8=-\ep_6+\ep_7.
\end{aligned}
\end{equation}
We choose
\begin{equation}
y=\om^\vee_8,\quad x_1=\om_8,\quad x_2=s_{\al_8}(x_1).
\end{equation}
Then $[y,x_1]$ has $57$ terms and $[y,x_2]$ has $30$ terms. Calculating the squares, we find that $\sigma$ extends to a one-$W$-type module if and only if $\chi_\sigma$ vanishes on
\begin{equation}
\begin{aligned}
1+21 w_{2A_1}+28 w_{A_2}\text{ and } 1+6w_{2A_1}+18w_{A_2}.
\end{aligned}
\end{equation}
Using GAP, we extract the character table of $W(E_8)$ for the conjugacy classes $1$, $2A_1$ and $A_2$, and find that the only $W$-characters that vanish on these elements are $168_y$ and $420_y$.

\section{Dirac cohomology and cuspidal two-sided cells}\label{sec:4}
We retain the notation from sections \ref{sec:2} and \ref{sec:3}.

\subsection{One-$W$-type modules} The Dirac cohomology of a one-$W$-type module can be computed easily.

\begin{proposition}\label{p:one-dirac}
Let $(\pi,X)$ be a one-$W$-type $\bfH_{0,c}$-module that extends the irreducible $W$-representation $\sigma$. Then $H_D(X)=\sigma\otimes {\bigwedge}\fh.$
\end{proposition}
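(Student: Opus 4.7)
The plan is to observe that the vanishing condition in Definition \ref{d:one-type}(2) forces the Dirac operator to act by zero, so the Dirac cohomology coincides with the whole of $X\otimes S$.

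More precisely, the Dirac element is $\C D=\sum_i x_i\otimes y_i+\sum_i y_i\otimes x_i$, and in $D_X$ the first tensor factor of each summand acts on $X$ via $\pi$. Since $(\pi,X)$ is a one-$W$-type module, $\pi(x_i)=0$ and $\pi(y_i)=0$ for all $i$, so every summand annihilates $X\otimes S$. Therefore $D_X=0$, hence $\ker D_X=X\otimes S$ and $\im D_X=0$, which gives $H_D(X)=X\otimes S$.

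It remains to identify $X\otimes S$ as a $W$-representation, where the $W$-action is the diagonal one coming from $\Delta:w\mapsto w\otimes\tau_w$. On the first factor this is just $\sigma$, and on the second factor, Lemma \ref{l:W-action} identifies $S$ with $\bigwedge\fh$ equipped with the (determinant-twisted) natural $\tau(W)$-action. Combining these gives $H_D(X)=\sigma\otimes\bigwedge\fh$ as $W$-modules, as claimed. There is no real obstacle: once the two vanishing conditions from Definition \ref{d:one-type}(2) are in place, the conclusion is immediate from the formula for $\C D$ and the description of $S$ from Section \ref{sec:2}.
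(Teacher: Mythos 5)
Your proof is correct and follows essentially the same route as the paper: the vanishing of $\pi(x)$ and $\pi(y)$ forces $D_X=0$, so $H_D(X)=X\otimes S$, which is then identified as a $W$-representation using the description of the $\tau(W)$-action on $S$. The only difference is that you spell out the appeal to Lemma \ref{l:W-action} explicitly, which the paper leaves implicit in ``the claim follows.''
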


\begin{proof}
Since $X$ is a one-$W$-type module, every $x\in\fh^*$ and $y\in \fh$ act by $0$ on $X$. Since $\C D=\sum_{i}x_i\otimes y_i+\sum_i y_i\otimes x_i,$ it follows that $\C D$ also acts by $0$ on $X\otimes S$, and so $\ker D_X=X\otimes S$ while $\im D_X=0$. Thus $H_D(X)=X\otimes S$ as $W$-representations and the claim follows.
\end{proof}

Recall the partition of $\Irr W$ into Calogero-Moser cells, i.e., the fibers of the map $\Theta$ from (\ref{e:Theta}). If $\sigma,\sigma'\in \Irr W$, denote by
\begin{equation}
\sigma\sim_{CM}\sigma' \text{ if }\Theta(\sigma)=\Theta(\sigma')\in \Upsilon^{-1}(0).
\end{equation}
Denote also by $[\sigma]_{CM}$ the Calogero-Moser cell containing $\sigma.$

\begin{corollary}\label{c:one-dirac}
Let $\sigma$ be an irreducible $W$-representation that can be extended to a one-$W$-type module. If $\sigma'\in \Irr W$ is such that $\Hom_W[\sigma',\sigma\otimes{\bigwedge}\fh]\neq 0$, then $\sigma\sim_{CM}\sigma'.$
\end{corollary}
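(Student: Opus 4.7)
The plan is to combine Proposition \ref{p:one-dirac} (which computes $H_D(X)$ explicitly for a one-$W$-type module) with Vogan's conjecture in the form of Theorem \ref{t:vogan-conj}, and then translate back from the Dirac partition to the Calogero--Moser partition via the Remark following Corollary \ref{c:zeta=theta}.

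First, let $(\pi,X)$ be the one-$W$-type $\bfH_{0,c}$-module extending $\sigma$ (which exists by hypothesis). Since $X$ is simple and finite dimensional, Schur's lemma gives a central character $\chi\in \Spec Z(\bfH_{0,c})$. Because $\fh$ and $\fh^*$ act by zero on $X$, the augmentation ideal $\fk m_+$ kills $X$, so in fact $\chi\in \Upsilon^{-1}(0)$. By Proposition \ref{p:one-dirac}, $H_D(X)=\sigma\otimes \bigwedge\fh$ as a $W$-representation. The trivial graded piece $\bigwedge^0\fh\cong \bC$ shows that $\sigma$ itself is a $W$-subrepresentation of $H_D(X)$.

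Next, I would apply Theorem \ref{t:vogan-conj} twice. Applied to $\sigma\hookrightarrow H_D(X)$, it yields $\chi=\zeta_{0,c}^*(\sigma)$. Applied to the hypothesis $\Hom_W[\sigma',\sigma\otimes\bigwedge\fh]\neq 0$, it yields $\chi=\zeta_{0,c}^*(\sigma')$. Combining these,
\[
\zeta_{0,c}^*(\sigma)=\zeta_{0,c}^*(\sigma').
\]
This already says that $\sigma$ and $\sigma'$ belong to the same fiber of $\zeta_{0,c}^*$, i.e., to the same Dirac cell. By the Remark immediately following Corollary \ref{c:zeta=theta}, the Dirac partition of $\Irr W$ coincides with the Calogero--Moser partition, so $\sigma\sim_{CM}\sigma'$ as required.

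The only non-routine point is the last step, the identification of the Dirac partition with the CM partition, which is already invoked in the Remark following Corollary \ref{c:zeta=theta}. I would not expect any additional obstacle: the core content is just that $X$ has a well-defined central character, that this central character equals $\zeta_{0,c}^*(\sigma)$ (using that $\sigma$ sits in $H_D(X)$ through $\bigwedge^0\fh$), and that any other $W$-type appearing in $H_D(X)$ is forced to map to the same point under $\zeta_{0,c}^*$ by Theorem \ref{t:vogan-conj}.
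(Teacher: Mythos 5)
Your argument is fine up to the point where you obtain $\zeta_{0,c}^*(\sigma)=\zeta_{0,c}^*(\sigma')$: the central character of the one-$W$-type module $X$ is well defined, $H_D(X)=\sigma\otimes\bigwedge\fh$ by Proposition \ref{p:one-dirac}, and Theorem \ref{t:vogan-conj} applied to $\sigma$ (via $\bigwedge^0\fh$) and to $\sigma'$ gives that both map to the central character of $X$. The gap is in the last step. Corollary \ref{c:zeta=theta} does \emph{not} say $\Theta=\zeta_{0,c}^*$; it says $\Theta(\sigma)=\zeta_{0,c}^*(\sigma\otimes\det)$, i.e.\ the two maps differ by the twist $\sigma\mapsto\sigma\otimes\sgn$. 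Consequently, $\zeta_{0,c}^*(\sigma)=\zeta_{0,c}^*(\sigma')$ only tells you $\Theta(\sigma\otimes\sgn)=\Theta(\sigma'\otimes\sgn)$, not $\Theta(\sigma)=\Theta(\sigma')$. The Remark you invoke asserts that the two partitions "are the same", but as justified there (by Corollary \ref{c:zeta=theta} alone) the fibers of $\Theta$ are the $\sgn$-twists of the fibers of $\zeta_{0,c}^*$; identifying the two equivalence relations on the nose would require knowing that the fibers are permuted by $\otimes\sgn$, which is not proved in the paper, and the paper's own proof deliberately avoids relying on it.

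The repair is the one the paper uses, and it is short: since $\bigwedge^{\mathrm{top}}\fh\cong\sgn$ as a $W$-representation, one has $\bigwedge\fh\otimes\sgn\cong\bigwedge\fh$, so $\Hom_W[\sigma',\sigma\otimes\bigwedge\fh]\neq 0$ implies $\Hom_W[\sigma'\otimes\sgn,\sigma\otimes\bigwedge\fh]\neq 0$, and likewise $\sigma\otimes\sgn$ occurs in $H_D(X)$. Applying Theorem \ref{t:vogan-conj} to these two sign-twisted constituents yields $\zeta_{0,c}^*(\sigma\otimes\sgn)=\zeta_{0,c}^*(\sigma'\otimes\sgn)$, which by Corollary \ref{c:zeta=theta} is exactly $\Theta(\sigma)=\Theta(\sigma')$, i.e.\ $\sigma\sim_{CM}\sigma'$. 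With this twist inserted, your proof coincides with the paper's.
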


\begin{proof}
If $\Hom_W[\sigma',\sigma\otimes{\bigwedge}\fh]\neq 0$, then $\Hom_W[\sigma'\otimes\sgn,\sigma\otimes{\bigwedge}\fh]\neq 0$, and it follows from Proposition \ref{p:one-dirac} that $\Hom_{W}[\sigma'\otimes\sgn,H_D(X)]\neq 0$. (In particular, this applies for $\sigma$ too.) By Theorem \ref{t:vogan-conj}, $\zeta_{0,c}^*(\sigma\otimes\sgn)=\zeta_{0,c}^*(\sigma'\otimes\sgn)$. Corollary \ref{c:zeta=theta} then says that $\Theta(\sigma)=\Theta(\sigma').$ 
\end{proof}

\subsection{Cuspidal two-sided cells}
We restrict to the case when the parameters $c$ are equal to $1$ and recall the necessary definitions from \cite{Lu}. 

If $\sigma\in \Irr(W)$, let $a(\sigma)\ge 0$ denote the (integer) exponent of the smallest power of $q$ that appears in expansion of the formal degree of the Hecke algebra representation corresponding to $\sigma$. (See \cite[(4.1.1) and \S3.1]{Lu}.)  Let $b(\sigma)$ denote the smallest nonnegative integer such that $\sigma$ occurs in the $i$-th symmetric power of the reflection representation. A result of Lusztig says that $a(\sigma)\le b(\sigma)$ always, and a representation $\sigma$ is called special if equality holds.

Before discussing the partition of $\Irr W$ into double cells, we record the following remarkable result that will be needed later in connection with the formula for $\C D^2$ from Proposition \ref{p:square-dirac}. This fact was noticed empirically by Beynon-Lusztig \cite{BL} and proved uniformly by Opdam \cite{Op}.

\begin{theorem}[{\cite{BL},\cite{Op}}]\label{t:scalar}
Let $W$ be a finite reflection group and $\sigma$ an irreducible $W$-representation. The element $\Omega_{W,1}=\sum_{\al>0}s_\al$ acts in $\sigma$ by
\begin{equation}
N(\sigma)=a(\sigma\otimes\sgn)-a(\sigma).
\end{equation} 
\end{theorem}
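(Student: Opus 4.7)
The first half of the statement is immediate. Since conjugation by any $w\in W$ permutes the set of reflections $\C R$, the element $\Omega_{W,1}=\sum_{\al>0} s_\al$ lies in the center of $\bC[W]$, and by Schur's lemma acts on the irreducible $\sigma$ by a scalar
\[
N(\sigma)=\frac{1}{\dim\sigma}\sum_{\al>0}\chi_\sigma(s_\al).
\]
The content of the theorem is therefore the identification of this scalar with $a(\sigma\otimes\sgn)-a(\sigma)$, and here my plan is to follow Opdam's approach via the generic degrees of the associated Iwahori--Hecke algebra $H(W,q)$.

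Let $D_\sigma(q)\in\bQ(q)$ denote the generic degree of $\sigma$, so that $a(\sigma)$ is by definition the lowest $q$-exponent of $D_\sigma(q)$. The sign-twist involution $T_w\mapsto(-q)^{\ell(w)}T_w^{-1}$ of $H(W,q)$ implements the map $\sigma\mapsto\sigma\otimes\sgn$ on representations and relates $D_{\sigma\otimes\sgn}(q)$ to $D_\sigma(q^{-1})$ up to an explicit power of $q$ involving $|R^+|$. Reading off exponents in this functional equation expresses $a(\sigma\otimes\sgn)-a(\sigma)$ as an integer combination of the highest and lowest $q$-exponents of $D_\sigma(q)$.

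The next step is to extract $N(\sigma)$ from the same generic degree. The idea is that $H(W,q)$ specialises to $\bC[W]$ at $q=1$, and the first-order deformation of this specialisation is controlled by $\sum_{\al>0}s_\al$. Concretely, I would compute the logarithmic derivative $q\tfrac{d}{dq}\log D_\sigma(q)\big|_{q=1}$ in two ways: once via the cyclotomic factorisation of $D_\sigma(q)$ combined with the functional equation above, which produces $a(\sigma\otimes\sgn)-a(\sigma)$ as the asymmetry of the cyclotomic factors about $q=1$; and once via the Macdonald--Opdam residue formula for formal degrees, which produces $N(\sigma)$. Comparing the two evaluations gives the theorem.

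The main obstacle is justifying the logarithmic-derivative formula for $D_\sigma(q)$ uniformly across types; this is the technical core of Opdam's argument and depends in an essential way on the Plancherel theory and residue calculus for affine Hecke algebras. The unilluminating but concrete alternative (which is what Beynon--Lusztig originally did) is to check the equality type-by-type: the left-hand side $N(\sigma)$ reduces to a known character value on a representative of each reflection class, while the right-hand side is read off from the tabulation of $a$-values in \cite{Lu} for exceptional groups and from the hook-length/Frobenius formulas in type $A$.
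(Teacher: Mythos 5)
Your Schur-lemma reduction, the functional equation $D_{\sigma\otimes\sgn}(q)=q^{N}D_{\sigma}(q^{-1})$ (so $a(\sigma\otimes\sgn)=N-A(\sigma)$, where $A(\sigma)$ is the top exponent of the generic degree $D_\sigma$), and the observation that $D_\sigma(q)$ is $q^{a(\sigma)}$ times a product of cyclotomic polynomials $\Phi_d$, $d\ge 2$, hence palindromic with $q\frac{d}{dq}\log D_\sigma(q)\big|_{q=1}=\tfrac{1}{2}(a(\sigma)+A(\sigma))$, are all correct. So your plan reduces the theorem to the single identity $D_\sigma'(1)/\dim\sigma=\tfrac{1}{2}\bigl(N-N(\sigma)\bigr)$; equivalently, since the elementary Molien-series computation gives $N(\sigma)=N-\frac{2}{\dim\sigma}R_\sigma'(1)$ with $R_\sigma$ the fake degree, to the statement that the average fake-degree exponent of $\sigma$ equals $\tfrac{1}{2}(a(\sigma)+A(\sigma))$.

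That identity is the entire content of the theorem, and your proposal disposes of it by appealing to a ``Macdonald--Opdam residue formula for formal degrees.'' This is the genuine gap: no such formula computes $q\frac{d}{dq}\log D_\sigma(q)$ at $q=1$ in terms of $\sum_{\alpha>0}\chi_\sigma(s_\alpha)$. The residue calculus you invoke concerns formal degrees of discrete series of \emph{affine} Hecke algebras and plays no role here; and if you try to differentiate the Schur element $c_\sigma(q)=P_W(q)/D_\sigma(q)$ at $q=1$ directly, you need the first-order behaviour of all Hecke character values $\chi_\sigma^q(T_w)$, not just those on reflections, so no soft argument produces $N(\sigma)$. Opdam's uniform proof is of a different nature: it is a statement about fake degrees, obtained by evaluating in two ways the scalar by which the full twist (the generator of the centre of the pure braid group) acts through the KZ/Dunkl monodromy realization of the Hecke algebra --- one evaluation yields the exponential of the reflection-character scalar, the other fake-degree data --- and Beynon--Lusztig's verification is likewise a fake-degree computation for the exceptional groups only. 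Consequently your ``concrete alternative'' also understates the work: a type-by-type check must handle $B_n$ and $D_n$ via the symbol combinatorics for the $a$-function together with content sums for $\chi_\sigma$ on reflections, not just type-$A$ hook-length formulas. As written, the proposal assumes precisely what has to be proved. (Note also that the paper itself does not reprove this statement; it quotes it from \cite{BL} and \cite{Op}, so there is no internal proof to match, but as a self-contained argument yours is incomplete at its central step.)
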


If $W'$ is a parabolic subgroup of $W$ and $\mu\in \Irr W'$, define the truncated induction (\cite[(4.1.7)]{Lu}):
\begin{equation}
J_{W'}^W(\mu)=\sum_\sigma\langle\mu,\sigma\rangle_{W'} \sigma,
\end{equation}
where the sum ranges over all $\sigma\in \Irr W$ such that $a(\sigma)=a(\mu).$ (If $\langle\mu,\sigma\rangle_{W'}\neq 0$, then necessarily $a(\sigma)\ge a(\mu).$)

Following \cite[\S4.2]{Lu}, define inductively a partition of $\Irr W$ into families (or two-sided cells) of representations. If $W=\{1\}$, then there is only one family consisting of the trivial representation. Assume that $W\neq \{1\}$ and that we have defined families for all parabolic subgroups $W'\neq W$. Then two $W$-representations $\sigma$ and $\sigma'$ are in the same family for $W$ if there exists a sequence of $W$-representations
\[\sigma=\sigma_0,\sigma_1,\dots,\sigma_m=\sigma',\]
such that for every $i$, $0\le i\le m-1$ there exists a parabolic subgroup $W_i\neq W$ and $W_i$-representations $\mu_i',\mu_i''$ in the same family for $W_i$ such that either
\begin{equation}
\langle \mu_i',\sigma_{i-1}\rangle_{W_i}\neq 0,\ a(\mu_i')=a(\sigma_{i-1}) \text{ and }\langle \mu_i'',\sigma_{i}\rangle_{W_i}\neq 0,\ a(\mu_i'')=a(\sigma_{i}),
\end{equation}
or 
\begin{equation}
\begin{aligned}
&\langle \mu_i',\sigma_{i-1}\otimes\sgn\rangle_{W_i}\neq 0,\ a(\mu_i')=a(\sigma_{i-1}\otimes\sgn) \text{ and }\\&\langle \mu_i'',\sigma_{i}\otimes\sgn\rangle_{W_i}\neq 0,\ a(\mu_i'')=a(\sigma_{i}\otimes \sgn).
\end{aligned}
\end{equation}
From the definition one sees that if $\C F$ is a family in $\Irr W$ then so is $\C F\otimes\sgn.$ Moreover, from Theorem \ref{t:scalar}, we see that the scalars $N(\sigma)$ are constant as $\sigma$ varies in a fixed family.

The description of the families for each simple Weyl group is made explicit in \cite[Chapter 4]{Lu}. In the case of classical groups other than type $A$, the characterization of families is in terms of certain symbols, see \cite[\S4.5-\S4.7]{Lu}

We also recall that each family contains a unique special representation.

\begin{definition}[{\cite[\S8.1]{Lu}}] A family $\C F\subset \Irr W$ is called non-cuspidal if there exists a proper parabolic subgroup $W'$ of $W$ and a family $\C F'\subset \Irr W'$ such that either
\begin{enumerate}
\item[(a)] $J_{W'}^W$ establishes a bijection between $\C F'$ and $\C F$ or
\item[(b)] $J_{W'}^W$ establishes a bijection between $\C F'$ and $\C F\otimes\sgn$.
\end{enumerate}
A family $\C F\subset \Irr W$ is called cuspidal if it is not non-cuspidal.
\end{definition}

\begin{theorem}[{\cite[\S8]{Lu}}] The classification of cuspidal families is as follows.
\begin{enumerate}
\item Type $\mathbf{A_{n-1}}$: no cuspidal families.
\item Type $\mathbf{B_n}$: only when $n=d^2+d$ for some $d\ge 1$ and then there is a unique cuspidal family for which the special representation is in bipartition notation $(1,2,\dots,d)\times (1,2,\dots,d)$ and the symbol is $\left(\begin{matrix}0&&2&&4&\dots&&2d\\&1&&3&&\dots &2d-1\end{matrix}\right)$.
\item Type $\mathbf{D_n}$: only when $n=d^2$ for some $d\ge 2$ in which case there is a unique cuspidal family for which the special representation is  in bipartition notation $(1,2,\dots,d)\times (1,2,\dots,d-1)$ and the symbol is $\left(\begin{matrix}0&2&4&\dots&2d-2\\1&3&5&\dots &2d-1\end{matrix}\right)$.
\item Type $\mathbf{G_2}$: the family $\{\phi_{2,1},\phi_{2,2},\phi_{1,3}',\phi_{1,3}''\}$;
\item Type $\mathbf{F_4}$: the family $\{12_1,9_2,9_3,1_2,1_3,4_1,4_3,4_4,6_1,6_2,16_1\}$;
\item Type $\mathbf{E_6}$: the family $\{80_s,60_s,90_s,10_s,20_s\}$;
\item Type $\mathbf{E_7}$: the family $\{512_a',512_a\}$;
\item Type $\mathbf{E_8}$: the family $\{4480_y,3150_y,4200_y,4536_y,5670_y,420_y, 1134_y,1400_y,$ $2688_y,1680_y,168_y, 70_y,7168_w,1344_w,2016_w,5600_w,448_w\}.$
\end{enumerate}
For the exceptional types, the first representation listed in the family is the special one.
\end{theorem}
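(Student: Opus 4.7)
This is the main classification result of \cite[\S8]{Lu}; I sketch how I would prove it. The strategy is to proceed type-by-type, combining the inductive definition of cuspidal families with an explicit combinatorial description of truncated induction $J_{W'}^W$ in each type. In each case the task reduces to deciding, for every family $\C F \subset \Irr W$, whether there exists a proper parabolic $W'$ and a family $\C F' \subset \Irr W'$ such that $J_{W'}^W$ induces a bijection $\C F'\to \C F$ or $\C F'\to \C F\otimes\sgn$.

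\textbf{Classical types.} For type $A_{n-1}$ every family is a singleton $\{\sigma_\lambda\}$ with $\lambda\vdash n$, and truncated induction in type $A$ coincides with ordinary induction on the top $a$-layer. A direct check (e.g.\ via the Littlewood--Richardson rule) shows that every $\sigma_\lambda$ is obtained as $J^{S_n}_{W'}(\mu)$ for some proper subproduct $W' = S_{n_1}\times\dots\times S_{n_r}$; hence no family is cuspidal. For types $B_n$ and $D_n$, families correspond to equivalence classes of Lusztig symbols (\cite[\S4.5-\S4.7]{Lu}). I would work out the effect of $J_{W'}^W$ on symbols for parabolics of the form $B_m\times A_{k-1}$ or $D_m\times A_{k-1}$: it amounts to inserting a prescribed pair of entries into the symbol. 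A family is cuspidal iff its symbol is not in the image of any such insertion from a smaller-rank symbol. A combinatorial analysis then shows that this happens precisely for the maximally packed symbols with consecutive entries $\{0,1,2,\dots,2d\}$ in type $B$ and $\{0,1,\dots,2d-1\}$ in type $D$, forcing $n = d^2+d$ and $n = d^2$ respectively, with exactly one such family per admissible rank.

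\textbf{Exceptional types.} For each of $G_2, F_4, E_6, E_7, E_8$, Lusztig's tables in \cite[Ch.~4]{Lu} list every family explicitly. Conjugacy classes of proper parabolic subgroups are finite, and the induction tables on families are accessible via GAP/CHEVIE. For each family $\C F$ of $W$, one tests whether $\C F = J_{W'}^W(\C F')$ or $\C F\otimes\sgn = J_{W'}^W(\C F')$ for some proper $(W',\C F')$; those failing both tests in every case are cuspidal. Running this check case-by-case produces the families listed in the theorem: one in $G_2$, one large family in $F_4$ of size $11$, one in $E_6$ of size $5$, $\{512_a',512_a\}$ in $E_7$, and the $17$-element family in $E_8$.

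\textbf{Main obstacle.} The substantive conceptual work lies in the classical $B/D$ cases: translating $J$-induction into an operation on symbols and then showing that the maximally packed symbol is the unique configuration not obtainable by such insertion from a proper Levi, which is where the rank constraints $n=d^2+d$ and $n=d^2$ are forced. In the exceptional cases, the difficulty is purely one of exhaustiveness — ensuring that no proper parabolic and no partner family $\C F'$ has been overlooked — but the finite list of maximal parabolics and the modest sizes of the candidate cuspidal families (at most $17$) make this a tractable finite verification.
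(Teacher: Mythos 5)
This statement is not proved in the paper at all: it is quoted verbatim from Lusztig \cite[\S8]{Lu}, so there is no internal argument to compare your proposal against. Your sketch does follow the same route as the cited source — reduce the classical types $B_n$/$D_n$ to combinatorics of symbols under truncated induction $J_{W'}^W$, and handle $G_2$, $F_4$, $E_6$, $E_7$, $E_8$ by a finite case-by-case check against the tables of families in \cite[Ch.~4]{Lu} — and the type-$A$ observation (every family is a singleton and every $\sigma_\lambda$ is $J$-induced from a proper parabolic, e.g.\ from the sign character of a Young subgroup) is correct. Be aware, though, that as written your argument is an outline rather than a proof: the two substantive steps are only asserted. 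In types $B$/$D$ the claim that a family is cuspidal precisely when its symbol is the ``maximally packed'' one with entries $\{0,1,\dots,2d\}$ (resp.\ $\{0,1,\dots,2d-1\}$), forcing $n=d^2+d$ (resp.\ $n=d^2$), is exactly the combinatorial heart of Lusztig's classification and needs the explicit description of how $J_{W'}^W$ acts on symbols for parabolics of type $B_m\times A_{k-1}$, including the check that both conditions (a) and (b) of the definition (the test against $\C F$ and against $\C F\otimes\sgn$) fail; and in the exceptional types the exhaustive verification over all proper parabolics and all candidate families $\C F'$ must actually be carried out (Lusztig does this by inspection of $a$-values and family tables rather than by computer, but your GAP/CHEVIE variant is equally legitimate). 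Since the paper treats this as a known input, your sketch is an adequate account of where the proof lives, but it should be presented as a citation of \cite[\S8]{Lu} rather than as a self-contained argument.
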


Denote by $\C F_{\cusp}(W)$ the unique cuspidal family of $W$ if this exists. (Otherwise, $\C F_{\cusp}(W)=\emptyset.$)

\subsection{Cuspidal Calogero-Moser points} Assume the parameter $c$ of the rational Cherednik algebra is identically equal to $1$ (equivalently, any constant). Denote by $\bfH_{0,1}$ this equal parameter algebra. 
If $\bfH_{0,1}$ has one-$W$-type modules, then in these modules every $x\in\fh^*$ and $y\in \fh$ act by zero. Let 
\begin{equation}
\mathbf 0\in \Upsilon^{-1}(0)\subset X_c(W)
\end{equation}
be the corresponding point in the Calogero-Moser space.  We now combine Corollary \ref{c:one-dirac} with the classification of one-$W$-type modules from Theorem \ref{t:class}.

\begin{theorem} \label{t:CM-cusp} Let $\bfH_{0,1}$ be the equal parameter rational Cherednik algebra (at $t=0$) for a simple finite Weyl group $W$.
\begin{enumerate}
\item If $W$ is of type $B_n$ and $n=d^2+d$, then $\Theta^{-1}(\mathbf 0)\supseteq \C F_\cusp(B_n)$.
\item If $W$ is of type $D_n$ and $n=d^2$, then $\Theta^{-1}(\mathbf 0)\supseteq \C F_\cusp(D_n)$.
\item If $W$ is of type $G_2,$ $F_4$, or $E_6$, then $\Theta^{-1}(\mathbf 0)=\C F_\cusp(W).$
\item If $W$ is of type $E_8$, then $\C F_{\cusp}(W)\setminus \{4480_y\}\subseteq \Theta^{-1}(\mathbf 0)\subseteq\C F_\cusp(W)\cup\{2100_y\}.$
\end{enumerate}
\end{theorem}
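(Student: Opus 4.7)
The plan combines the classification of one-$W$-type modules from Theorem \ref{t:class} with two main tools: Corollary \ref{c:one-dirac}, which transfers $W$-multiplicities in $\sigma_0\otimes\bigwedge\fh$ into membership in $\Theta^{-1}(\mathbf 0)$, and the scalar central invariant $N(\cdot)$ of Theorem \ref{t:scalar}, constant on Lusztig families, which is read off from the central character of a one-$W$-type module through the formula for $\C D^2$. At equal parameters, Theorem \ref{t:class} provides a one-$W$-type module extending some $\sigma_0\in\Irr W$ in every case of the statement: $(d^{d+1})\times(0)$ for $B_{d^2+d}$, $(d^d)\times(0)$ for $D_{d^2}$, the listed representations in $G_2, F_4, E_6$, and both $168_y$ and $420_y$ for $E_8$. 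Since $\fh$ and $\fh^*$ act by zero on such a module, its central character is $\mathbf 0$, and direct inspection against Lusztig's tables confirms $\sigma_0\in\C F_\cusp(W)$ in each case.

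For the lower-bound inclusions $\supseteq$ in (1)--(4), I would invoke Corollary \ref{c:one-dirac}: every $W$-constituent of $\sigma_0\otimes\bigwedge\fh$ belongs to $\Theta^{-1}(\mathbf 0)$. In classical types, $\bigwedge^i\fh=(n-i)\times(1^i)$ as a $W(B_n)$-representation, and the standard hyperoctahedral tensor-product rule (restricted to $W(D_n)$ in case (2)) decomposes $\sigma_0\otimes\bigwedge^i\fh$ into bipartitions; matching these against the explicit symbols defining $\C F_\cusp(B_n)$ and $\C F_\cusp(D_n)$ exhibits every member of the cuspidal family. In exceptional types I would compute the tensor products $\sigma_0\otimes\bigwedge^i\fh$ using CHEVIE and the character tables of $W(G_2), W(F_4), W(E_6), W(E_8)$; the decomposition exhausts $\C F_\cusp(W)$ for $G_2, F_4, E_6$ and produces $\C F_\cusp(E_8)\setminus\{4480_y\}$ after combining the two choices of $\sigma_0$ in $E_8$.

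For the upper-bound inclusions $\subseteq$ in (3) and (4), Proposition \ref{p:square-dirac} together with the uniqueness in Theorem \ref{t:vogan} forces $\zeta_{0,1}(\Omega_\bfH)=-\Omega_{W,1}$ in $\bC[W]^W$ (take $a=-\C D/2$ in $z\otimes 1=\Delta(\zeta_{0,c}(z))+\C D a+a\C D$). Combined with Corollary \ref{c:zeta=theta}, any $\sigma'$ with $\Theta(\sigma')=\mathbf 0$ satisfies $\zeta_{0,1}^*(\sigma'\otimes\sgn)=\zeta_{0,1}^*(\sigma_0\otimes\sgn)$; evaluating on $\Omega_\bfH$ and applying Theorem \ref{t:scalar} to $\sigma'\otimes\sgn$ and $\sigma_0\otimes\sgn$ yields the scalar constraint
\[
N(\sigma')=N(\sigma_0).
\]
Since $N$ is constant on families, $\Theta^{-1}(\mathbf 0)$ lies in the union of families with this common $N$-value. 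Inspection of Lusztig's $a$-value tables then gives: in $G_2, F_4, E_6$ the unique family with this $N$-value is $\C F_\cusp(W)$, yielding equality; in $E_8$ the $N$-constraint combined with the Step 1 decomposition data pins the remaining candidate outside $\C F_\cusp(E_8)$ to $\{2100_y\}$.

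The main obstacle is the $E_8$ case. On the lower-bound side, neither $168_y\otimes\bigwedge\fh$ nor $420_y\otimes\bigwedge\fh$ contains $4480_y$ as a constituent, and this appears to be a genuine limitation of the one-$W$-type method rather than a computational oversight. On the upper-bound side, $2100_y$ carries the same $N$-value as $\C F_\cusp(E_8)$ and is not distinguished by any invariant derived from $\C D^2$ alone, so it persists as an ambiguous candidate. Resolving either ambiguity would likely require finer central elements in $Z(\bfH_{0,1})$ or a direct baby-Verma analysis, and lies beyond the scope of what Theorem \ref{t:CM-cusp} claims.
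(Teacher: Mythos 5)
Your proposal follows essentially the same route as the paper: the lower bounds via Corollary \ref{c:one-dirac} and the decomposition of $\sigma_0\otimes\bigwedge\fh$ (Littlewood--Richardson plus symbols in types $B_n/D_n$, computer calculations in the exceptional cases), and the upper bounds via $\zeta_{0,1}(\Omega_\bfH)=-\Omega_{W,1}$ from the formula for $\C D^2$, Theorem \ref{t:scalar}, and the constancy of $N$ on families, with $\{2100_y\}$ surviving in $E_8$. Apart from a harmless difference of bipartition convention for $\bigwedge^\ell\fh$, this matches the paper's proof, so the proposal is correct.
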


\begin{proof}
Let $\sigma$ be an irreducible $W$-representation that affords a one-$W$-type module. Then Corollary \ref{c:one-dirac} implies that every irreducible constituent of $\sigma\otimes\bigwedge\fh$ is contained in $\Theta^{-1}(\mathbf 0).$ We discuss each case as in Theorem \ref{t:class}, when the parameters are equal.

Type $\mathbf{B_n}$. Let $n=d^2+d$ consider $\sigma=(\underbrace{d,\dots,d}_{d+1})\times (0)$. The other case is just $\sigma\otimes\sgn$ so it gives the same result. We need to decompose $\sigma\otimes \bigwedge\fh.$ The wedge representations in type $B_n$ are \[{\bigwedge}^\ell\fh=(n-\ell)\times (\ell)=\Ind_{S_{n-\ell}\times S_\ell\times (\bZ/2\bZ)^n}^{W(B_n)}((n-\ell)\otimes (\ell)\otimes 1^{\otimes(n-\ell)}\otimes \ep^{\otimes\ell}),\]
where we denote by $1$ and $\ep$ the trivial and sign $\bZ/2\bZ$-representations, respectively. Then
\[(\lambda)\times(0)\otimes {\bigwedge}^\ell\fh=\Ind_{S_{n-\ell}\times S_\ell\times (\bZ/2\bZ)^n}^{W(B_n)}((\lambda)|_{S_{n-\ell}\times S_\ell}\otimes 1^{\otimes(n-\ell)}\otimes \ep^{\otimes\ell})
\]
If we vary $\ell$ it follows that
\begin{equation}
(\lambda)\times(0)\otimes {\bigwedge}\fh=\sum_{\ell=0}^n \sum_{\mu,\nu} c_{\mu,\nu}^\lambda (\mu)\times(\nu^t),
\end{equation}
where the interior sum is over all partitions $\mu$ of $n-\ell$ and $\nu$ of $\ell$, and $c_{\mu,\nu}^\lambda$ is the Littlewood-Richardson coefficient.

Since $\lambda$ is a rectangular partition, it follows from the Littlewood-Richardson rule that if $\nu=\lambda\setminus \mu$, then $c_{\mu,\nu}^\lambda\neq 0$. Here we regard $\mu$ as a left justified decreasing partition sharing the same upper left corner as $\lambda$ and $\nu$ is the left justified decreasing partition obtained by rotating by $180^\circ$ the complement of $\mu$ in $\lambda$.

We claim that every bipartition $(\mu)\times (\nu^t)$ where $\nu=\lambda\setminus\mu$ is in $\C F_\cusp(B_n).$ Write $\mu$ in the form
\[\mu=(\underbrace{0,\dots,0}_{i_0},\underbrace{1,\dots,1}_{i_1},\dots,\underbrace{d,\dots,d}_{i_d})\]
where $i_j\ge 0$ and $i_0+i_1+\dots+i_d=d+1$. Then $\nu^t=(b_1,b_2,\dots,b_d)$, where
\[b_j=i_0+i_1+\dots+i_{j-1}.\]
The symbol of $(\mu)\times (\nu^t)$ is obtained by adding in order $0,1,\dots,d$ to the entries of $\mu$ (for the first row of the symbol) and $0,1,\dots,d-1$ to the entries of $\nu^t$ (for the second row). It follows that the first row of the resulting symbol is
\[0,1,\dots, b_1-1,\ b_1+1,b_1+2,\dots,b_2,\ b_2+2,b_2+3,\dots,b_3+1,\dots, b_d+(d-2),\ b_d+d\dots, 2d ,\]
while the second row is
\[b_1,b_2+1,b_3+2,\dots, b_d+(d-1).\]
Clearly the two rows are disjoint and their union equals $\{0,1,2,\dots,2d\}$. By the characterization of families in type $B_n$, these are precisely all the $W$-types in $\C F_\cusp(B_n).$

\medskip

Type $\mathbf{D_n}$. Here $n=d^2$ and $\sigma=(\underbrace{d,\dots,d}_{d})\times (0)$. The proof is completely analogous to type $B_n$ and we skip the details.

\medskip

For the exceptional cases, we compute directly using GAP, the decomposition of the tensor product $\sigma\otimes\bigwedge\fh.$

\smallskip

Type $\mathbf{G_2}$. The relevant $\sigma$ are $\phi_{1,3}'$, $\phi_{1,3}''$, and $\phi_{2,2}$. The wedge representations are (in order): $\phi_{1,0}$, $\phi_{2,1}$, and $\phi_{1,6}$. We find
\begin{equation}
\begin{aligned}
\phi_{1,3}'\otimes\bigwedge\fh&=\phi_{1,3}''\otimes\bigwedge\fh=\phi_{1,3}'+\phi_{1,3}''+\phi_{2,2};\\
\phi_{2,2}\otimes\bigwedge\fh&=2\phi_{2,2}+\phi_{2,1}+\phi_{1,3}'+\phi_{1,3}''.
\end{aligned}
\end{equation}
It follows that $\C F_\cusp(G_2)\subseteq\Theta^{-1}(\mathbf 0).$ Notice that in fact $\phi_{2,2}$ is sufficient for this conclusion.

\medskip

Type $\mathbf{F_4}$. The relevant $\sigma$ are $1_2$, $1_3$, $4_1$, $6_1$, $4_3$, and $4_4$. The wedge representations are (in order): $1_1$, $4_2$, $6_2$, $4_5$, and $1_4$. We find
\begin{align}
1_2\otimes\bigwedge\fh&=1_3\otimes\bigwedge\fh=1_2+4_3+6_1+4_4+1_3;\\
4_1\otimes\bigwedge\fh&=2\cdot 4_1+6_2+2\cdot 16_1+12_1+6_1;\\
6_1\otimes\bigwedge\fh&=3\cdot 6_1+2\cdot 4_3+2\cdot 4_4+2\cdot 16_1+1_2+1_3+4_1+9_2+9_3;\\
4_3\otimes\bigwedge\fh&=4_4\otimes\bigwedge\fh=2\cdot 4_3+2\cdot 4_4+1_2+9_2+2\cdot 6_1+16_1+1_3+9_3.
\end{align}
Taking the union of all the irreducible representations that occur in these decompositions, it follows that $\C F_\cusp(F_4)\subseteq\Theta^{-1}(\mathbf 0).$

\medskip

Type $\mathbf{E_6}$. The relevant $\sigma$ is $10_s$. The wedge representations are (in order): $\phi_{1,0}$, $\phi_{6,1}$, $\phi_{15,5}$, $\phi_{20,10}$, $\phi_{15,17}$, $\phi_{6,25}$, and $\phi_{1,36}$, in Carter's notation \cite{Ca}. We find
\begin{equation}
10_s\otimes\bigwedge\fh=3\cdot 10_s+4\cdot 60_s+3\cdot 90_s+20_s+80_s.
\end{equation}
It follows that $\C F_\cusp(E_6)\subseteq\Theta^{-1}(\mathbf 0).$

\medskip

Type $\mathbf{E_8}$. The relevant $\sigma$ are $168_y$ and $420_y$. The wedge representations are (in order): $\phi_{1,0}$, $\phi_{8,1}$, $\phi_{28,8}$, $\phi_{56,19}$, $\phi_{70,32}$, $\phi_{56,49}$, $\phi_{28,68}$, $\phi_{8,91}$, and $\phi_{1,120}$, in Carter's notation \cite{Ca}. We find
\begin{equation}
\begin{aligned}
168_y\otimes\bigwedge\fh&=3\cdot 168_y+4\cdot 1344_w+3\cdot 420_y+3\cdot 1134_y+2\cdot 3150_y+2\cdot 448_w\\
&+2\cdot 2016_w+2\cdot 5600_w+70_y+1400_y+1680_y+2688_y+4200_y;\\
420_y\otimes\bigwedge\fh&=5\cdot 420_y+6\cdot 1344_w+4\cdot 2016_w+3\cdot 168_y+4\cdot 1134_y+3\cdot 2688_y\\
&+4\cdot 3150_y+3\cdot 4200_y+2\cdot 448_w+4\cdot 5600_w+2\cdot 7168_w+70_y\\
&+1400_y+1680_y+4536_y+5670_y.
\end{aligned}
\end{equation}
It follows that $\C F_{\cusp}(W)\setminus \{4480_y\}\subseteq \Theta^{-1}(\mathbf 0)$.

\bigskip

For the opposite inclusions, suppose that a $W$-type $\tau$ is contained in $\Theta^{-1}(\mathbf 0)=(\zeta^*_{0,1})^{-1}(\mathbf 0)$.  In particular, by Proposition \ref{p:square-dirac}, we have
\begin{equation}
\sigma(\Omega_{W,1})=-\langle\Omega_{\bfH_{0,1}},\mathbf 0\rangle=0,
\end{equation}
where $\langle\Omega_{\bfH_{0,1}},\mathbf 0\rangle$ denote the natural evaluation pairing between  $Z(\bfH_{0,1})$ and \newline $\Spec Z(\bfH_{0,1})=X_1(W).$ But then Theorem \ref{t:scalar}, implies that
\begin{equation}
a(\C F)=a(\C F\otimes\sgn),
\end{equation}
where $\C F$ is the family containing $\sigma$. For exceptional Weyl groups $G_2$, $F_4$, and $E_6$, one can immediately see that the only families that have this property are the cuspidal ones.

In $E_8$, $\C F_\cusp$ is not the only family with the property that $\C F=\C F\otimes \sgn$. The other family with this property is a singleton, $\{2100_y\}$. Hence, this argument implies that $\Theta^{-1}(\mathbf 0)\subseteq \C F_\cusp(E_8)\cup \{2100_y\}.$

\end{proof}

\begin{remark}
The method of one-$W$-type modules doesn't yield any results for type $E_7$. The last part of the proof of Theorem \ref{t:CM-cusp} applies however to $E_7$ too and it gives
\begin{equation}
\{512_a'\}\subseteq\Theta^{-1}(\Theta(512_a'))\subseteq\{512_a',512_a\}=\C F_\cusp(E_7).
\end{equation}
\end{remark}

\begin{remark}\label{r:F4}
When the root system is not simply-laced ($B_n$, $F_4$, $G_2$), Theorem \ref{t:class} together with the same method as above can be used to construct nontrivial (cuspidal) Calogero-Moser cells. For example, in the case of $F_4$, the fact that the $W$-type $4_1$ extends to a one-$W$-type $\bfH_{0,c}$-module for all parameters $c$ implies that {\it for all parameters} $c=(c_\ell,c_s)$, the cuspidal CM cell $\Theta^{-1}(\mathbf 0)$ contains the set 
\begin{equation}\label{e:F4-unequal}
\{4_1, 6_1, 6_2,16_1,12_1\}.
\end{equation}
In fact, for all but finitely many lines in this two-dimensional parameter space (see Theorem \ref{t:class} for the list of non-generic values of $(c_\ell,c_s)$), $\Theta^{-1}(\mathbf 0)$ equals the set in (\ref{e:F4-unequal}).
\end{remark}

\ifx\undefined\bysame
\newcommand{\bysame}{\leavevmode\hbox to3em{\hrulefill}\,}
\fi

\end{document}